\numberwithin{equation}{section}
\newtheorem{theorem}[equation]{Theorem}
\newtheorem{lemma}[equation]{Lemma}
\newtheorem{proposition}[equation]{Proposition}
\newtheorem{corollary}[equation]{Corollary}
\def\Q{\mathbb{Q}}
\def\R{\mathbb{R}}
\newcommand{\bA}{\mathbb{A}}
\newcommand{\bR}{\mathbb{R}}
\newcommand{\bQ}{\mathbb{Q}}
\newcommand{\bZ}{\mathbb{Z}}
\newcommand{\bC}{\mathbb{C}}
\newcommand{\GL}{\mathrm{GL}}
\newcommand{\SL}{\mathrm{SL}}
\newcommand{\SO}{\mathrm{SO}}
\title[Composition Law and Central Value of L-function]{Bhargava's Composition Law and Waldspurger's Central Value Theorem}
\author{Jun Wen}
\subjclass[2010]{Primary 11F67; Secondary 11S90.}
\keywords{Periods, central L-values, prehomogeneous vector spaces}
\email{jwen@math.umass.edu}
\address{Department of Mathematics and Statistics, University of Massachusetts Amherst}
\begin{document}
\bibliographystyle{alpha}
\maketitle
\begin{abstract}
We reprove a Waldspurger's formula which relates the toric periods  and the central values of L-functions of $\GL_2$. Our technique, different from the original theta-correspondence approach and the more recent relative trace formula, relies on the exploit of distributions defined on prehomogeneous vector spaces. 
\end{abstract}

\section{Overview}
\subsection{Introduction} The Waldspurger's theorem on central value of $\GL_2$ L-function is one of the most celebrated results in number theory.  One of its applications is the study of periods of automorphic forms. For example, in \cite{gangross}, Gan, Gross and Prasad propose a conjecture on the non-vanishing of period integrals of certain automorphic forms of unitary groups in terms of the special value of $L$-function; recently Ichino and Ikeda \cite{ichinoikeda} formulate an analogous conjecture for orthogonal groups.

To the author's knowledge, there are three proofs to the theorem to date. The original one is due to Waldspurger \cite{waldspurger}, and uses the theta correspondence. More precisely, Waldspurger relates the periods of cusp forms of $\GL_2$ over a non-split torus, via the Shimizu lifting, to the integral representation of the expected L-function. For a more recent account of this method, the reader can consult \cite{xinyiweizhang}. Another approach is due to 
Jacquet \cite{jacquet} using the relative trace formula. It should be mentioned that Sakellaridis \cite{yiannis} recently provides another proof, still based on the relative formula, which fits into the Langlands' Beyond Endoscopy program. 

The purpose of this paper is to propose another proof to Waldspurger's theorem: the key point is to exploit the distributions on certain prehomogeneous vector spaces. The argument is completely independent of any known approach, it might provide the first example along this method to explore the higher-rank cases of the Gross-Prasad conjectures. 

\subsection{Statement of the result} We will state the theorem for the simplest case, because this is the version we prove in the present paper; but in fact, the general case of the theorem can also be dealt with in an analogous way. 
We hope to complete the argument of the general case in sequel. 

Let $f$ be an $\SL_2(\bZ)$ Maass cusp form of weight $0$, and $\pi$ the corresponding automorphic representation of $\GL_2$ with $f \in \pi$. Define the Peterson inner product by
\begin{align*}
<\phi, \varphi> = \int_{\SL_2(\bZ) \backslash \SL_2(\bR)} \phi(g) \varphi(g) dg, \ \ \phi \in \pi, \varphi \in \pi^{\vee}. 
\end{align*}
Throughout the paper, we fix an odd fundamental discriminant $D$. Denote by $\Lambda(D)$ the set of Heegner points of the quadratic extension $E= \bQ(\sqrt{D})$ over $\bQ$. Let $\chi_D(\cdot)$ be 
the quadratic character associated to $E/\bQ$. Then our main theorem is 

\begin{theorem}\label{main}
Let $S$ be the set of finite places where $E$ is ramified, and denote by $\pi_E$ the base change from the cuspidal representation $\pi$, then we have
\begin{align}
&\frac{|\sum\nolimits^{\ast}_{z\in \Lambda(D)} f(z)|^2}{||f||^2} = \frac{h(D)}{8 \pi} \frac{\zeta(2) L(1/2, \pi_{E}) }{L(1, \pi, \mathrm{Ad})L(1, \chi_D)} \prod_{v \in S} P_v,  
\end{align}
where $\sum\nolimits^{\ast}$ denotes the sum weighted by one over the order of a stabilizer group, and $h(D)$ is the class number of the quadratic field $E$.
\end{theorem}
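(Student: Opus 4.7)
The strategy is to realize the squared Heegner sum as the $\pi$-isotypic component of a second moment of a theta kernel on the prehomogeneous vector space $V = \Sym^2$ of binary quadratic forms, acted on by $\GL_2$, and then to identify that second moment with $L(1/2,\pi_E)$ via the Shintani-type distribution attached to $V$. The basic dictionary is Bhargava's composition law: $\SL_2(\bZ)$-orbits on the discriminant-$D$ integer points of $V$ are in natural bijection with ideal classes of $\calO_E$, hence with $\Lambda(D)/\SL_2(\bZ)$, and the $\SL_2(\bZ)$-stabilizer of a primitive form $Q$ matches the unit group of the corresponding ideal class. Thus the weighted sum $\sum^{*}_{z \in \Lambda(D)} f(z)$ can be reinterpreted as a single $\GL_2(\bQ)$-orbital integral of $f$ over the anisotropic torus $E^\times \backslash \GL_2(\bQ)$, i.e., as a toric period against the discriminant-$D$ orbit in $V(\bQ)$.

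First I would choose an adelic Schwartz function $\Phi = \bigotimes_v \Phi_v$ on $V(\bA)$ and form the theta kernel $\Theta_\Phi(g) = \sum_{Q \in V(\bQ)} \Phi(g^{-1}\cdot Q)$. Unfolding $\int_{[\SL_2]} f(g)\,\Theta_\Phi(g)\,dg$ along the orbit decomposition by discriminant, isolating $d=D$, and applying the Bhargava dictionary identifies this integral with $c_\Phi \sum^{*}_{z \in \Lambda(D)} f(z)$ for an explicit constant $c_\Phi$ depending on local data. Iterating with $f \otimes \overline{f}$ (equivalently, pairing the kernel against the matrix coefficient $\langle \pi(g)f,f\rangle$) produces $|\sum^{*}_{z\in\Lambda(D)} f(z)|^2$ on one side. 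On the other side, the second-moment integral factors as an Eulerian product of local orbital integrals on $V(\bQ_v)$; at unramified places the Shintani--Sato functional equation for the local zeta function of $(\GL_2, V)$ identifies these factors, after Mellin inversion, with the local factors of
\begin{align*}
\frac{\zeta(s)\, L(s,\chi_D)\, L(s,\pi_E)}{L(2s,\pi,\mathrm{Ad})}
\end{align*}
evaluated at the critical point. Dividing by $\|f\|^2$ absorbs $L(1,\pi,\mathrm{Ad})$ into the denominator; the factors $\zeta(2)/8\pi$ and $1/L(1,\chi_D)$ arise from Tamagawa volume of $[\SL_2]$ and the class number formula $h(D) \cdot L(1,\chi_D) \sim \sqrt{D}$ respectively, giving the global shape on the right-hand side of the theorem.

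The principal obstacle is the local analysis at ramified places $v \in S$, where $V(\bQ_v)$ has a non-generic orbit structure and the Shintani--Sato calculation above degenerates. There one must choose $\Phi_v$ adapted to the Kirillov model of $\pi_v$, and the factor $P_v$ measures the discrepancy between the resulting local orbital integral and the naive local $L$-factor; computing it reduces to an explicit integral in the local Whittaker model which I expect to be tractable but delicate. A secondary difficulty is the archimedean matching, since the Maass form $f$ is not a lowest-weight vector, so $\Phi_\infty$ must be tailored to its $K$-type and the corresponding Mellin identity for the archimedean orbital integral handled by contour shift. Finally, a regularization is needed to separate cuspidal from continuous spectrum in the spectral expansion of $\Theta_\Phi$ before projecting onto $\pi$; the residual and Eisenstein contributions must be shown to cancel, which I anticipate will be the most technically intricate step.
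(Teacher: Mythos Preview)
Your proposal takes a genuinely different route from the paper, and the central mechanism you rely on---the ``second moment'' step---is where the gap lies.

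The paper does \emph{not} work with the prehomogeneous space $V=\Sym^2$ of binary quadratic forms. Instead it uses two larger PVS in tandem: the space of $2\times 2\times 2$ integer cubes under $\SL_2^3$, and the space $\bZ^2\otimes\wedge^2\bZ^4$ of pairs of quaternary alternating $2$-forms under $B_2\times P_{2,2}$. The point of the first space is that Bhargava's composition law identifies its discriminant-$D$ orbits with \emph{pairs} of ideal classes, so integrating a theta kernel on cubes against $f(g_2)\overline{f}(g_3)$ produces $|\sum^\ast f(z)|^2$ \emph{directly}, with no squaring device needed. The point of the second space is structural: it contains the cube space as a ``singular'' locus (via the fusion map $\mathrm{id}\otimes\wedge_{2,2}$), and crucially it has a \emph{single} rational orbit under the relevant parabolic, whose stabilizer is the unipotent radical of $\GL_2$. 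This is what makes the Rankin--Selberg unfolding go through and produces the Euler product; the local factors are then computed by counting $\bZ_p$-orbits and matched to $L_p(1/2,\pi_E)/L_p(1,\pi,\mathrm{Ad})$ via an explicit spherical-function calculation. The passage between the two spaces is mediated by a Shintani integral on the larger space whose residue (via Poisson summation in an auxiliary variable) recovers the cube distribution.

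Your sketch, by contrast, stays on $\Sym^2$, where orbits correspond to \emph{single} ideal classes, so integrating $f$ against a theta kernel there yields only the linear period $\sum^\ast f(z)$. You then propose to square by ``iterating with $f\otimes\overline f$'' or ``pairing the kernel against the matrix coefficient,'' and assert that the resulting second-moment integral factors as an Euler product. This is the unjustified step: once you have a scalar, squaring it is tautological and carries no new Eulerian structure; if instead you mean an integral of $\Theta_\Phi(g_1)\overline{\Theta_\Phi(g_2)}$ against a matrix coefficient, you must explain over what domain, why it unfolds, and why the diagonal/off-diagonal terms separate to give exactly $L(1/2,\pi_E)$---none of which is automatic for the $\Sym^2$ PVS, whose orbit and stabilizer structure do not supply the unipotent unfolding the paper exploits. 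The paper's entire architecture (cubes for the square, alternating $2$-forms for the single-orbit unfolding) exists precisely to replace this missing mechanism.
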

Moreover, the explicit expression for $P_v$ can be obtained. 

\subsection{Idea of the proof}
The method of the proof can be view as a generalization of the Shintani's zeta functions to the higher rank automorphic forms. The Shintani's zeta functions associated to prehomogeneous vector spaces (PVS) was originally defined in \cite{satoshintani}. Studying the functional equations of these zeta functions, Shintani in \cite{shintani}, \cite{shintani2} obtained the average value of class number of binary quadratic forms and binary cubic forms respectively. To establish the functional equations, one apply the Poission summation formula and show that the explicit poles and residues are encoded in the orbit integrals associated to singular orbits of PVS. On the other hand, the adelic version of Shintani's functions has shown relevant to the Hecke L-functions in a few cases (\cite{wright}, \cite{wright2}). However, the adelization of Shintani's zeta function become more difficult to treat 
in general cases. 

The first step of our proof is to realize the period integrals of an automorphic form as the distribution associated to a PVS. This step becomes easy due to M. Bhargava's work on the extension of Gauss's composition law. What matter here is the study of the integral orbits of various PVS, especially the PVS of $2 \times 2\times 2$ cubes. Forgetting the group structure, Bhargava's composition law states that the integral orbits are in one to one correspondent to the pairs of (strict) class groups of binary quadratic forms. This motivates us to define an integral on the set of $2 \times 2\times 2$ cubes and to show the integral breaks up into the product of the sum of automorphic form valued at the Heegner points. 

The next step is to relate this integral to the special value of L-function of the automorphic form. The key point here is to study a certain parabolic group action on the  skew-symmetrization of $2 \times 2\times 2$ cubes, i.e., the PVS of pairs of quaternary alternating $2$-forms. The structure of integral orbits of this PVS is also studied by Bhargava and shown to be in one to one correspondent to the (strict) class groups of binary quadratic forms. Then we also define a distribution on this PVS associated to the parabolic group. Next we apply the Poission summation formula to show there is a pole of the distribution and the residue gives rise to the distribution on the PVS of  $2 \times 2\times 2$ cubes defined earlier. In some sense, the $2 \times 2\times 2$ cubes can be viewed as the singular set of  PVS of pairs of quaternary alternating $2$-forms. This is made clear if we generalize the PVS of a reductive group to its parabolic subgroups. 

The last step is to study the adelic version of the distribution on PVS of pairs of quaternary alternating $2$-forms. The reason for this consideration is that there is only one rational orbit of this PVS; moreover, the rational stabilizer group of 
a representative is the unipotent subgroup of $\GL_2$ and the adelic stabilizer group is a $\GL_2$. This means integrating along the rational stabilizer group, after the Selberg-Rankin method, becomes an Euler product; furthermore, integrating the adelic stabilizer group gives rise to the matrix coefficients of the automorphic representation. After careful analyzing the local orbit integrals at unramified places via the local orbits counting, we show each of these local integrals is related to the central value of base change L-function of the automorphic form. 

\subsection*{Acknowledgements}
The author is very grateful to Yusin Sun for her inspiration and encouragement, and enjoyable conversations we have when this paper was on progress. 

\section{PVS of $2\times 2 \times 2$ matrices}
In a seminal series of papers (\cite{bhargava1}, \cite{bhargava2}, \cite{bhargava3}, \cite{bhargava4}), M. Bhargava has extended Gauss's composition law for binary quadratic forms to far more general situations. The key step in his extension is the investigation of the integral orbits of a group over $\bZ$ acting on a lattice in a prehomogeneous vector space. One of Bhargava's achievements is the determination of the corresponding integral orbits, i.e. the determination of the $\SL_2(\bZ)^3$-orbits on $\bZ^2 \otimes \bZ^2 \otimes \bZ^2$. In particular, he discovered that in this case, the generic integral orbits are in bijection with isomorphism classes of tuples $(A, I_1, I_2, I_3)$ where 
\begin{itemize}
\item[(a)] $A$ is an order in an \'{e}tale quadratic $\bQ$-algebra; 
\item[(b)] $I_1,I_2$ and $I_3$ are elements in the narrow class group of $A$ such that $I_1 \cdot I_2 \cdot I_3 =1$.
\end{itemize}

In fact, Bhargava's cube arises naturally in the structure theory of the linear algebraic group of type $D_4$. More precisely, let $G$ be a simply connected Chevalley group of this type. It has a maximal parabolic subgroup $P = M\cdot N$ whose Levi factor $M$ has a derived group $M_{\mathrm{der}}(F) \cong \SL_2(F)^3$ and whose unipotent radical $N$ is a Heisenberg group. The adjoint action of $M_{\mathrm{der}}(F)$ on $V = N(F) /[N(F), N(F)]$ is isomorphic to Bhargava's cube. 

 It is noted that the spaces of forms studied in Bhargava's papers were considered as special examples in the fundamental work of M. Sato and T. Kimura (\cite{satokimura}), where the authors completely classified the prehomogeneous vector spaces over $\bC$. Over other fields, such as the rational number field, these spaces were more recently studied in the work of D. Wright and A. Yukie (\cite{wrightyukie}).

\subsection{PVS of $2\times 2\times2$ cubes as the spherical variety}
Let $G$ be a connected complex Lie group, usually $G$ is a complexification of a real Lie group. A prehomogeneous vector space (PVS) $V$ of $G$, denoted by $(G, V)$, is a complex finite dimensional vector space $V$ together with a holomorphic representation of $G$, such that $G$ has an open dense orbit in $V$. Let $P$ be a complex polynomial function on $V$. We call it a relative invariant of $G$ if $P(g v) =\chi(g) P(v)$ for some rational character $\chi$ of $G$. 

A spherical variety for a reductive group $G$ over a field $k$ is a normal variety together with a $G$-action, such that the Borel subgroup of $G$ has a dense orbit.  In this section we consider the certain Borel subgroup action on a PVS, 
and show it again a PVS thus a spherical variety. The main purpose is to show the relative invariants of the Borel subgroup action completely determine the integral orbits. 

Let  $V(\mathbb{Z})$ be the set of $2\times 2 \times 2$ integral matrices. For each element $A \in V(\bZ)$, there are three ways to form pairs of matrices by taking the opposite sides out of $6$ sides. Denote them by
\begin{align}
&M_A^1=\left(\begin{array}{ccc}a &b \\
c &d
\end{array}\right); N_A^1= \left( \begin{array}{ccc}e &f \\
g &h
\end{array}\right),
\notag \\
&M_A^2=\left(\begin{array}{ccc}a &e \\
c &g
\end{array}\right); N_A^2 =\left( \begin{array}{ccc}b &f \\
d &h
\end{array}\right),
\notag \\
&M_A^3=\left(\begin{array}{ccc}a &e \\
b &f
\end{array}\right);  N_A^3=\left( \begin{array}{ccc}c &g \\
d &h
\end{array}\right).
\notag 
\end{align}
For each pair $(M_A^i, N_A^i)$ we can associate to it a binary quadratic form by taking
$$Q_A^i(u,v) =- \det(M_A^i u - N_A^i v) .$$
Explicitly for $A$ as above,
\begin{align} -Q_A^1(u,v) &= u^2(ad-bc) +uv ( -ah+bg+cf-de) +v^2(eh-fg), \notag \\
-Q_A^2 (u,v)&= u^2(ag-ce)+uv(-ah-bg+cf+de) + v^2(bh-df), \notag \\
-Q_A^3(u,v)&= u^2(af-be) +uv(-ah+bg-cf+de) +v^2(ch-dg).\notag \end{align}

Following Bhargava \cite{bhargava1}, we call $A$ projective if the associated binary quadratic forms are all primitive. The action of group $G(\bZ)=\SL_2( \bZ) \times \SL_2( \bZ) \times \SL_2(\bZ)$ on $V(\bZ)$ is defined by taking the $g_i$ in $\left( g_1, g_2, g_3 \right)$ acts on the matrix pair $(M_A^i, N_A^i)$. It is easy to check that the actions of the three components commute with each other, thereby giving an action of the product group. For example, if $g_1 = \left( \begin{matrix} g_{11} & g_{12} \\
g_{21} & g_{22} \end{matrix} \right) $, then it acts on the pair $(M_A^1, N_A^1)$ by 
$$\left( \begin{matrix} g_{11} & g_{12} \\
g_{21} & g_{22} \end{matrix} \right) \cdot \left( \begin{matrix} M_A^1 \\ N_A^1 \end{matrix} \right).$$

The action extends to the complex group $G(\bC)= \GL_2(\bC) \times \GL_2(\bC) \times \GL_2(\bC)$ on the complexified vector space $V(\bC)$. Now we consider the Borel subgroup $B'_2( \bC) \subset \GL_2(\bC)$ consisting of the lower-triangular matrices. The action of the subgroup $B'_2(\bC) \times B'_2(\bC) \times \GL_2(\bC)$ is induced from full group action and has three relative invariants, explicitly for $A \in V(\bC)$, given as follows
\begin{align}
D(A) & =\mathrm{disc}(A)  =(-ah+bg+cf-de)^2 - 4 (ad-bc)(eh-fg) , \notag \\
m(A) & =-\det(M_A^1) =-(ad-bc) ,\notag \\
n(A) &=-\det(M_A^2) =-(ag-ce). \notag 
\end{align}
The corresponding rational character are
\begin{align*}
\chi_1(g) &= \det(b_1)^2 \det(b_2)^2 \det(g_3)^2,\\
\chi_2(g) &= r_1^2 \det(b_2) \det(g_3),\\
\chi_3(g) &= \det(b_1) r_2^2 \det(g_3),
\end{align*}
for $g = (b_1, b_2, g_3) = \left( \left( \begin{matrix}  
r_1 & 0 \\
u_1 & s_1
\end{matrix} \right),  \left( \begin{matrix}  
r_2 & 0 \\
u_2 & s_2
\end{matrix} \right), g_3 \right)$.
\\Furthermore, one can easily show that
\begin{proposition}
The pair $\left(B'_2(\bC) \times B'_2(\bC) \times \GL_2(\bC) , V(\bC)\right)$ is a prehomogeneous vector space.
\end{proposition}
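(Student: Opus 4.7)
The plan is to verify prehomogeneity by exhibiting an explicit generic point $A_0 \in V(\bC)$ whose orbit under $G := B'_2(\bC) \times B'_2(\bC) \times \GL_2(\bC)$ is open in $V(\bC)$, hence of dimension $\dim V(\bC) = 8$. A convenient choice is the cube with coordinates $(a,b,c,d,e,f,g,h) = (1,0,0,1,0,1,1,0)$, equivalently $M_{A_0}^1 = I$ and $N_{A_0}^1 = J := \bigl(\begin{smallmatrix}0&1\\1&0\end{smallmatrix}\bigr)$; direct substitution into the formulas in the text yields $D(A_0) = 4$ and $m(A_0) = n(A_0) = -1$, so $A_0$ lies off the vanishing locus of every relative invariant.

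Openness of $G \cdot A_0$ is equivalent to surjectivity of the differential $d\phi_e$ of the orbit map $\phi : g \mapsto g \cdot A_0$ at the identity. Differentiating the action described in the excerpt (recall that $g_2$ acts on the third index of the cube and $g_3$ on the second), for $X = (X_1, X_2, X_3) \in \mathfrak{g} := \mathfrak{b}'_2 \oplus \mathfrak{b}'_2 \oplus \mathfrak{gl}_2$ the image $d\phi_e(X)$ on the slice pair becomes
\[
\dot M^1 = (X_1)_{11}\, M^1 + X_3\, M^1 + M^1\, X_2^T, \quad \dot N^1 = (X_1)_{21}\, M^1 + (X_1)_{22}\, N^1 + X_3\, N^1 + N^1\, X_2^T,
\]
where $(X_1)_{12} = (X_2)_{12} = 0$ by lower-triangularity. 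Evaluating at $A_0 = (I, J)$ produces eight explicit linear forms on the $10$-dimensional space $\mathfrak{g}$; a short Gaussian elimination shows that these eight forms are linearly independent, so $d\phi_e$ has rank $8$ and $G \cdot A_0$ is open.

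The only substantive content is the rank check in the last step, and I do not anticipate any obstacle: a putative linear dependence among the eight forms translates into ten scalar equations (one per parameter of $\mathfrak{g}$) that quickly force all coefficients to vanish. A qualitative observation worth recording is that replacing the $\GL_2$-factor in the third slot by a Borel, thereby making $G$ the full Borel $B'_2(\bC)^3 \subset \GL_2(\bC)^3$, would introduce a nontrivial dependence among the eight forms, reflecting the fact that $V(\bC)$ is not $\GL_2^3$-spherical in the Borel sense; the extra freedom of the full $\GL_2$ in the third slot is therefore essential.
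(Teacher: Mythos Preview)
Your argument is correct. The paper itself offers no proof of this proposition beyond the phrase ``one can easily show that'', so there is nothing substantive to compare against; your tangent-space computation at the explicit point $A_0$ with $M^1_{A_0}=I$, $N^1_{A_0}=J$ is precisely the natural way to fill in the details, and the eight linear forms you extract from $(\dot M^1,\dot N^1)$ are indeed independent (a direct check in the ten parameters of $\mathfrak b'_2\oplus\mathfrak b'_2\oplus\mathfrak{gl}_2$ forces every coefficient of a putative linear relation to vanish).

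One small remark: the parenthetical about which $g_i$ acts on which tensor index appears to be swapped relative to the paper's conventions, but your differential formula $\dot M^1=(X_1)_{11}M^1+X_3M^1+M^1X_2^T$ is in any case consistent with the action the paper defines, so the computation is unaffected. Your closing observation is also on target and can be made precise without any computation: the $2$-dimensional torus $\{(\lambda I,\mu I,\nu I):\lambda\mu\nu=1\}$ acts trivially on $V(\bC)$ and lies inside $B'_2(\bC)^3$, so every $B'_2(\bC)^3$-orbit has dimension at most $9-2=7<8$; hence the full $\GL_2$ in the third slot is genuinely required for prehomogeneity.
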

The immediate corollary is that $\left(G(\bC), V(\bC)\right)$ is again a PVS which is called the $D_4$ type studied in \cite{wrightyukie}.  

\subsubsection{Shintani's double Dirichlet series}
Our consideration of the Borel subgroup action on a PVS is inspired by the Shintani's work \cite{shintani}, where the author studies the zeta function associated to PVS of binary quadratic forms and hence obtains the average value of 
the size of class group of quadratic fields. We recall briefly here the Shintani's zeta function.

Let $U(\bC)=\{Q(u, v)= a u^2 +b uv+ cv^2|(a, b, c )\in \bC^3\}$ be the complex vector space of binary quadratic forms. The representation $\rho$ of the group $\GL_2({\bC})$ is 
as follows
$$\rho(g) (Q)(u, v)  = Q(au +cv , bu+dv)$$
for $ g =\left( \begin{array}{cc}
a & b  \\
c & d \end{array} \right)$. It is easy to see that $(B'_2(\bC), \rho, U(\bC))$ is a PVS and has two relative invariants, namely, the discriminant $\mathrm{disc}(Q)=b^2 -4 ac$ of the quadratic form and $a= Q(1, 0)$. These two invariants freely generate the ring of relative invariants. 

Let $B'_2(\bZ)$ be the Borel subgroup $B'_2(\bC) \cap \SL_2(\bZ)$. The Shintani's zeta function associated to the PVS $(B'_2({\bC}),\rho, U(\bC))$ is defined to be
\begin{align*}
Z( s, w)& = \sum_{Q \in B'_2({\bZ}) \backslash U^{ss}(\bZ)}\frac{1}{|Q(1, 0)|^s |\mathrm{disc}(Q)|^w} \notag \\
&=\sum_{a\neq 0} \frac{1}{|a|^{s}} \sum_{\substack{0 \leq b \leq 2a -1 \\ c: b^2 - 4a c \neq 0}} \frac{1}{|b^2 - 4 a c|^w }. \notag 
\end{align*}
where $U^{ss}(\bZ)$ is the semi-stable subset of (not necessarily primitive) integral quadratic forms with both $\mathrm{disc}(Q), Q(1, 0) \neq 0$.
Alternatively, one can express the Shintani's zeta function as 
$$ \sum _{Q \in \SL_2({\bZ}) \backslash U^{ss}(\bZ)} \frac{1}{|\mathrm{disc}(Q)|^{w}} \sum _{\gamma \in B'_2({\bZ}) \backslash \SL_2({\bZ}) /\mathrm{stab}_{Q}}\frac{1}{|\gamma \cdot Q(1, 0)|^{s}}.$$
If we denote by $A(d, a)$ the number of solutions to the quadratic congruence equation $x^2 = d \ ( \mathrm{mod} \  a)$. Then Shintani's zeta function can be written as
$$Z( s, w)= \xi_1(s, w) + \xi_2(s, w),$$
where
$\xi_i(s, w)= \sum_{a, d > 0 } \frac{A( (-1)^{i-1}d,4a)}{a^s d^w} .$

The study of multi-variable Dirichlet series receives extensive attentions in recent years, especially on the development of Weyl group multiple Dirichlet
series (WMDS). Those who are interested in WMDS can consult the expository paper \cite{BBCFH}. Though the study of relation between WMDS and Shintani's zeta function associated to the PVS is not our purpose in this paper,  we show that $Z(s, w)$ is a $A_2$ Weyl group multiple Dirichlet
series. The technique used in the proof will be needed again in the last section to obtain the local L-function in Waldspurger's theorem. 
We start with the definition of a $A_2$ WMDS.

Weyl group multiple Dirichlet series are a class of multiple Dirichlet series coming from Eisenstein series on metaplectic groups.  The simplest example is the quadratic $A_2$ Weyl group double Dirichlet series (see for instance
\cite{chintagunnells}, \cite{chintagunnells2}),  it is defined as
$$ Z_{A_2}(s,w) =\sum_{\substack{ m > 0,  \\ D \ \mathrm{odd} \  \mathrm{discriminant}}} \frac{\chi_D(\hat{m})}{m^s |D|^w}a(D, m), $$
where $\hat{m}$ is the factor of $m$ that is prime to the square-free part of $D$ and $\chi_D$ is the quadratic character associated to the field extension $\Q(\sqrt{D})$ of $\bQ$. More interesting to us is the multiplicative factor $a(D, m)$, which is defined by
$$a(D,m) = \prod_{ p^k||D,p^l||m}a(p^k, p^l)$$ 
and
$$a(p^k, p^l) = \begin{cases} \min(p^{k/2},p^{l/2})  & \mbox{if}\  \min(k,l)\  \mbox{is even}, \notag \\
   0 & \mbox{otherwise}. \notag \end{cases}$$
Then the relation between Shintani's zeta function $Z(s, w)$ and $Z_{A_2}(s, w)$ is implied by
\begin{proposition}
Fix $D$ an odd discriminant. Then
$$  \sum_{m>0} \frac{A(D, 4m)}{ m^s} =2  \frac{\zeta(s)}{\zeta(2s)} \sum_{m>0} \frac{\chi_D(\hat{m}) a(D, m) }{ m^s}.$$ 
\end{proposition}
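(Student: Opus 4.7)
The plan is to prove this identity by recognizing both sides as Euler products and matching them prime by prime. The key preliminary observation is multiplicativity: since $D$ is odd, the Chinese Remainder Theorem splits the congruence $x^2 \equiv D \pmod{4m}$ into a condition mod $2^{v_2(m)+2}$ times one mod the odd part of $m$, so $m \mapsto A(D,4m)$ is multiplicative. On the right, $\chi_D(\hat{m})\,a(D,m)$ is multiplicative by construction, and $\zeta(s)/\zeta(2s)=\prod_p(1+p^{-s})$. So the problem reduces to checking, at each prime $p$, the local identity between
\begin{equation*}
L_p(s):=\sum_{l\ge 0}\frac{A(D,p^l)}{p^{ls}}\quad(p\text{ odd}),\qquad L_2(s):=\sum_{l\ge 0}\frac{A(D,2^{l+2})}{2^{ls}},
\end{equation*}
and $(1+p^{-s})\sum_{l\ge 0}\chi_D(\hat{p^l})a(D,p^l)p^{-ls}$, with the overall factor of $2$ to be absorbed at $p=2$.

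I would then split into cases for odd $p$. For $p\nmid D$, Hensel's lemma gives $A(D,p^l)=1+\chi_D(p)$ for $l\ge 1$, so $L_p(s)=(1+\chi_D(p)p^{-s})/(1-p^{-s})$; meanwhile $\hat{p^l}=p^l$, $a(D,p^l)=1$ (because $v_p(D)=0$ makes $\min(0,l)$ even), and the right side simplifies to $(1+p^{-s})/(1-\chi_D(p)p^{-s})$. A direct check in the two cases $\chi_D(p)=\pm 1$ shows both expressions equal $(1+p^{-s})/(1-p^{-s})$ or $1$ respectively. For odd $p\mid D$, fundamentality forces $p\|D$, whence $A(D,p)=1$ and $A(D,p^l)=0$ for $l\ge 2$, giving $L_p(s)=1+p^{-s}$; on the right, $\hat{p^l}=1$ and $a(p,p^l)$ vanishes for $l\ge 1$ (since $\min(1,l)=1$ is odd), so the series collapses to $1$, and $(1+p^{-s})\cdot 1$ matches.

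The case $p=2$ requires slightly more care and is where the factor $2$ on the right-hand side enters. Since $D\equiv 1\pmod 4$ (odd fundamental discriminant), I would separate $D\equiv 1\pmod 8$ and $D\equiv 5\pmod 8$, noting these correspond to $\chi_D(2)=1$ and $\chi_D(2)=-1$. A short computation of squares mod $2^k$ yields $A(D,4)=2$ in both cases, $A(D,2^{l+2})=4$ for $l\ge 1$ when $\chi_D(2)=1$, and $A(D,2^{l+2})=0$ for $l\ge 1$ when $\chi_D(2)=-1$. Summing gives $L_2(s)=2(1+2^{-s})/(1-2^{-s})$ and $L_2(s)=2$ in the two cases respectively, which in both cases equals exactly $2\cdot(1+2^{-s})/(1-\chi_D(2)2^{-s})$, i.e.\ twice the local factor on the right.

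Assembling the local identities recovers the global formula. The only real obstacle is the bookkeeping at $p=2$, in particular verifying that $A(D,2^{l+2})$ stabilizes correctly for $l\ge 1$ and pinning down where the explicit factor of $2$ originates; the odd-prime verification is essentially a clean exercise in Hensel's lemma and the definitions of $\hat m$ and $a(D,m)$.
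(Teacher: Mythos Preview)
Your overall strategy---reduce to an Euler product and verify the local identity prime by prime, with the global factor of $2$ arising at $p=2$---is exactly the approper's. Your treatment of $p=2$ is correct and matches the paper's lemma almost verbatim.

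However, there is a genuine gap at the odd primes. The proposition is stated for an arbitrary odd discriminant $D$, not a fundamental one, and the paper's proof explicitly treats the general case: writing $D=D_0 p^k$ with $(D_0,p)=1$ and $k\ge 0$ arbitrary. Your case split ``$p\nmid D$'' and ``$p\mid D$, whence fundamentality forces $p\|D$'' omits the cases $p^k\| D$ with $k\ge 2$, which do occur (e.g.\ $D=45$, $p=3$). In those cases $A(D,p^l)$ and $a(D,p^l)$ are genuinely more complicated (they involve $p^{\lfloor l/2\rfloor}$ and the piecewise definition of $a(p^k,p^l)$), and the two local Euler factors are not the simple rational functions you wrote down. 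The paper handles this uniformly by proving the additive identity
\[
A(D,p^l)=\chi_D(\hat{p}^l)\,a(D,p^l)+\chi_D(\hat{p}^{l-1})\,a(D,p^{l-1})
\]
for all odd $p$ and all $k,l\ge 0$, which immediately yields $\sum_l A(D,p^l)p^{-ls}=(1+p^{-s})\sum_l \chi_D(\hat{p}^l)a(D,p^l)p^{-ls}$. Your argument is fine if one additionally assumes $D$ fundamental, but as written it does not prove the proposition in the generality stated.
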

\begin{proof}
Analogous to the $p$-part formula of $a(p^k, p^l)$, we will show that for an odd prime $p$ 
$$A(p^k, p^l)  = \begin{cases} 2 a(p^k, p^l) &\text{if} \ k<l, \\ p^{\lfloor l /2 \rfloor} &\mbox{otherwise}. \end{cases}$$
First consider the case when $ p \neq 2 $.  If $k<l$ and $k$ is an odd integer, then the congruence equation $x^2= p^k (\text{mod} \ p^l) $  reduces to the equation of $x^2 =  p  (\text{mod} \ p^i) $  for some power $i$ of $p$ and there is no solution to it; while when $k$ is an even integer, the congruence equation $x^2 = p^k ( \text{mod} \ p^l)$ reduces to the equation of or $ x^2 =  1  (\text{mod} \ p^i)$ and there are two solutions to it. In both case, the number of solutions are both equal to the value of $2 a(p^k, p^l)$ by its definition.

If on the other hand $k \geq l$, then the set of solutions to the congruence equation $x^2 = p^k (\text{mod} \ p^l)$ is the set of multipliers of $p^{\left\lceil \frac{l}{2} \right\rceil}$, so the number of distinct solutions mod $p^l$ is $p^{\left\lfloor \frac{l}{2} \right\rfloor}$.

Therefore, for odd prime $p$,
$$ A(p^k,p^l ) =\chi_{p^k} (\hat{p}^l) a(p^k,p^l) + \chi_{p^k} (\hat{p}^{l-1}) a(p^k, p^{l-1}) =a(p^k,p^l) + a(p^k, p^{l-1}),  $$
where we set the term $ a(p^k, p^{l-1})$ equal to $0$ when $l=0$.

Next, using Hensel's lemma, an integer $d$ relatively prime to an odd prime $p$ is a quadratic residue modulo any power of $p$ if and only if it is a quadratic residue modulo $p$. In fact, if an integer $d$  is prime to the odd prime $p$, as
$$A(d, p^l) =2 \iff  \chi_d(p) =1 \iff A(d, p) =2, $$ 
so
$$A(d, p^l) = \chi_d(p^l) + \chi_d(p^{l-1}). $$

By the prime power modulus theory \cite{gauss}, if the modulus is $p^l$,
then $p^kd$ is a
$$\begin{cases}
\text{quadratic residue modulo} \ p^l\  \text{if} \  k \geq l, \\
 \text{non-quadratic residue modulo}\  p^l\  \text{if}\  k < l \ \text{is odd}, \\
\text{quadratic residue modulo} \ p^l \ \text{if}\  k < l \ \text{ is even and}\  d \ \text{is a quadratic residue},\\
\text{non-quadratic residue modulo} \ p^l\  \text{if}\  k < l \ \text{ is even and otherwise}.
\end{cases}$$
Therefore, for an odd integer $d$ prime to $p\neq 2$, we have
$$A(dp^k, p^l) = \begin{cases}  0 & \chi_{d}(p^l) = \mbox{-1 and} \  k<l \ \text{even} ,\\  A(p^k,p^l) &\mbox{otherwise}. \end{cases}$$
In the former case, we have:
\begin{equation*}\tag{1}
A(dp^k, p^l) =0= \chi_{d p^k}(\hat{p}^l) a(dp^k, p^l) + \chi_{d p^k}(\hat{p}^{l-1}) a(dp^k, p^{l-1}).
\end{equation*}
In the latter case, we also have:
\begin{equation*} \tag{2}
A(dp^k, p^l)  = A(p^k, p^l)= \chi_{d p^k}(\hat{p}^l) a(dp^k, p^l) + \chi_{d p^k}(\hat{p}^{l-1}) a(dp^k, p^{l-1}).
\end{equation*}

Now let $D$ be an arbitrary odd discriminant. Given an prime integer $p$, write $D= D_0 p^k$, where $D_0$ is prime to $p$. Then from the equality $(1)$ and $(2)$ with $d$ replaced by $D_0$,
it follows that
$$ \sum_{l=0}^{\infty} A(D, p^l) p^{-ls}  =(1-p^{-2s}) (1-p^{-s})^{-1} \sum_{l=0}^{\infty} \chi_{D}(\hat{p}^l) a(D, p^l) p^{-ls}.$$  

For $p=2$, we define $\tilde{P}_2(D, s)$ by equating
\begin{align} \sum_{l =0}^{\infty} \frac{A(D,  2^{l+2})}{2^{ls} } &=\tilde{P}_2(D, s)  (1- 2^{-2s})  (1-2^{-s})^{-1} \sum_{l=0}^{\infty}\frac{ \chi_D(2^l) }{2^{ls}} \notag \\
&=\tilde{P}_2(D, s)  (1- 2^{-2s}) (1-2^{-s})^{-1} \sum_{l=0}^{\infty}\frac{ \chi_D(2^l) a(D, 2^l)}{ 2^{ls}}. \notag
\end{align}

By the multiplicative property of $A(D, \cdot)$ and that of $\chi_D(\cdot) a(D, \cdot)$, 
 $$ \sum_{m>0} \frac{A(D, 4m)}{ m^{s}} =  \tilde{P}_2(D, s) \zeta(2s)^{-1} \zeta(s) \sum_{m>0} \frac{\chi_D(\hat{m}) a(D,m)}{ m^{s}} .$$

It remains to compute $\tilde{P}_2$. This is obtained by the next lemma.

\begin{lemma}
Let $D$ be an odd discriminant. With $\tilde{P}_2(D, s)$ defined in the last proposition, then
\begin{align*}
\tilde{P}_2(D, s)  = 2.
\end{align*}
\end{lemma} 
\begin{proof}
Write 
$$ \sum_{l =0} \frac{A(D,  2^{l+2})}{2^{ls} } = A(D, 4) +  \sum_{l =1} \frac{A(D,  2^{l+2})}{2^{ls} } ,$$
and note that 
$$A(D, 4)  = \begin{cases} 2 & D  \equiv1 \  \text{or} \ 5  \ (\text{mod} \  8), \\ 0 & \mathrm{otherwise}. \end{cases}$$
To simplify the second term, note that if $D$ is an odd integer and $m = 8,16$, or some higher power of $2$, then $D$ is a quadratic residue modulo $m$ if and only if $D \equiv1 (\text{mod} \  8)$, therefore for $l \geq 1$,
$$ A(D, 2^{l+2}) = \begin{cases} 4& D  \equiv1 \ (\text{mod} \  8) ,\\  0 & \text{otherwise} .\end{cases}$$
Also note that 
$$ \chi_D(2)  = \begin{cases} 1& D  \equiv1 \ (\text{mod} \  8) , \\ -1 & D \equiv 5 \ (\mathrm{mod} \ 8),\\  0 & \text{otherwise} .\end{cases}$$
Then direct computation gives the results. 
\end{proof}
This finishes the proof of the proposition. 
\end{proof}

\subsubsection{Reduction theory}
In this section, we study integral orbits of the PVS $\left(B'_2(\bC)\times B'_2(\bC) \times \GL_2(\bC), V(\bC) \right)$. We are going to show that the integral orbit in this PVS is completely determined by
its three relative invariants. 

\begin{lemma}
Let $D$, $m$ and $n$ be non-zero integers. For each solution $(x, y)$ to the congruence equations 
\begin{align} 
x^2 &\equiv D  \ ( \mathrm{mod} \ 4m)  \ \text{for} \ 0\leq x \leq 2 |m| -1, \notag \\
y^2  &\equiv D \ ( \mathrm{mod} \ 4n\ )  \ \text{for} \ 0\leq y \leq 2 |n| -1, \notag 
\end{align}
there exists a $2\times 2\times2$ integer cube $A$ such that 
\begin{align}
\mathrm{disc}(A) &= D , \notag \\
Q^1_A(u,v) & = mu^2+ x uv+ sv^2, \notag \\
Q^2_A(u,v) &=n u^2+ y uv + t v^2. \notag
\end{align}
Moreover, the required $2 \times 2 \times 2$ integer cube $A$ can be chosen such that in the top side $M_A^3$ of $A$
\begin{align*}
a=0 \ \mathrm{and} \ g.c.d.(b, e, f) =1. 
\end{align*}
\end{lemma}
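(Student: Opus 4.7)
The plan is to exhibit the cube explicitly. Set $s=(x^2-D)/(4m)$ and $t=(y^2-D)/(4n)$, both integers by the congruences, and abbreviate $u=(y-x)/2$ and $v=(x+y)/2$ (integers since $D$ odd forces $x,y$ odd). The key structural choice will be
$$a = 0, \qquad c = \gcd(m,n,v), \qquad b = m/c, \qquad e = n/c, \qquad f = -v/c.$$
Each entry is integral, and immediately $\gcd(b,e,f) = \gcd(m,n,v)/c = 1$, securing the ``moreover'' assertion. The relations $bc = m$ (giving $\det M_A^1 = -m$), $ce = n$ (giving $\det M_A^2 = -n$), and the middle-coefficient identity $cf = -(x+y)/2$ all hold by construction.

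What will remain is the $3\times 3$ linear system for $d,g,h$:
$$bg - de = u, \qquad eh - fg = -s, \qquad bh - df = -t,$$
which collects the surviving constraints on $Q_A^1$ and $Q_A^2$ after setting $a=0$ (the two middle-coefficient equations collapse to the first after using $cf=-v$). The coefficient matrix has rank $2$ over $\bQ$, with one-dimensional left kernel spanned by $(f,b,-e)$, so $\bQ$-consistency requires $fu - bs + et = 0$. Substituting the definitions of $s,t$ and using $x^2-y^2 = -4uv$, the expression collapses to zero.

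To upgrade to $\bZ$-solvability I would invoke the Smith normal form. The nine $2\times 2$ minors of the coefficient matrix are each of the form $\pm b^\alpha e^\beta f^\gamma$ with $\alpha+\beta+\gamma = 2$, so their gcd equals $\gcd(b,e,f)^2 = 1$; combined with the entry gcd $\gcd(b,e,f) = 1$, the Smith form is $\mathrm{diag}(1,1,0)$. Hence the image of the coefficient map $\bZ^3 \to \bZ^3$ is exactly the saturated rank-$2$ sublattice annihilated by $(f,b,-e)$, and $(u,-s,-t)$ lies in it by the consistency identity. An integer solution $(d,g,h)$ therefore exists, and a short bookkeeping check confirms that the resulting cube has $\mathrm{disc}(A) = D$ together with the prescribed $Q_A^1, Q_A^2$.

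I expect the main obstacle to be coordinating the choice of $c$ so that $\gcd(b,e,f) = 1$ and integer solvability of the residual system hold simultaneously; naive choices like $c=1$ or $c=\gcd(m,n)$ can fail one or the other. The key insight is that $c = \gcd(m,n,v)$ enforces $\gcd(b,e,f)=1$ by construction, and this same coprimality is precisely what drives the Smith-normal-form argument to yield $\bZ$-solvability, unifying the two requirements in a single choice.
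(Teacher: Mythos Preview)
Your construction is correct and matches the paper's exactly through the choice $a=0$, $c=\gcd(m,n,(x+y)/2)$, $b=m/c$, $e=n/c$, $f=-(x+y)/(2c)$, which forces $\gcd(b,e,f)=1$. The only divergence is in solving for $(d,g,h)$: the paper first finds $h$ by a Chinese-remainder argument over the prime divisors of $f$ (handling $f=0$ separately), then reads off $d$ and $g$ from $s=gf-eh$ and $t=df-bh$; you instead treat the three residual equations as a single integer linear system and use the Smith normal form, with $\gcd(b,e,f)=1$ forcing invariant factors $(1,1,0)$ so that the saturated image coincides with the kernel of $(f,b,-e)$. Your route is tidier and avoids the case split on $f$, while the paper's is more constructive and yields explicit entries. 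One small correction: the lemma does not assume $D$ odd, so your parenthetical ``$D$ odd forces $x,y$ odd'' is too narrow; the right justification is that $x^2\equiv y^2\equiv D\pmod 4$ forces $x$ and $y$ to share parity, whence $(x\pm y)/2\in\bZ$ in every case.
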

\begin{proof}
If there are solutions to the congruence equations, we have
$$D  =x^2 - 4 m s  = y^2 - 4 n t$$
for some integers $s$ and $t$. It implies that $D$ is congruent to $0$ or $1 \ (\mathrm{mod} \ 4)$, and the integers $x, y$ have the same parity. Take 
$$ c= |g.c.d.(m, n, \frac{x+y}{2})|,$$
from 
\begin{equation*}\tag{1} \frac{x-y}{2} \cdot \frac{x+y}{2} = ms - nt, \end{equation*}
it follows that 
\begin{equation*}\tag{2} g.c.d.(\frac{m}{c}, \frac{n}{c})\  | \ \frac{x-y}{2}.\end{equation*}
Set
$$b = \frac{m}{c}, \ e = \frac{n}{c}, \ \text{and} \ f=-\frac{x+y}{2 c}, $$
then 
$$g.c.d.(b,e,f) =1,$$
and $(1)$ can be written as
\begin{equation*} \tag{3}\frac{x-y}{2} \cdot (-f) = bs- et.\end{equation*}

We claim that there is an integer $h$, such that 
\begin{align}
s+ eh & \equiv 0 \ (\text{mod}\ f), \notag \\
t + bh & \equiv 0 \ (\text{mod}\ f). \notag
\end{align}
This can be proved as follows: First if $f=0$, then $bs = et$. As in this case $g.c.d.(b,e) =1$, we conclude that there exists such an integer $h$ such that $ s= -eh$ and $t = -bh$. If $f \neq 0$, for any prime divisor $p$ of $f$, we have 
$$p\ | \ bs- et \  \text{and} \ g.c.d.(b,e,f)=1,$$
it follows that there is a unique solution ($\text{mod} \ p$) to the congruences 
\begin{align}
s+ eh & \equiv 0 \ (\text{mod}\ p), \notag \\
t + bh & \equiv 0 \ (\text{mod}\ p). \notag
\end{align}
Using the Chinese remainder theorem, we conclude that there are integers $d$ and $g$ such that
\begin{align}
s = gf - eh, \notag \\
t = df - bh. \notag
\end{align}
It follows that 
$$ bs- et = (bg- de) \cdot f,$$  
combined with $(3)$, we have
$$\frac{x-y}{2}= de- bg.$$
If in the case of $f=0$, from $(2)$, we know that there always exist integers $d$ and $g$ such that the above equation holds. 

Now we define a $2 \times 2\times 2$ integer cube $A$ by 
$$
(M_A^1, M_A^2)= \left(\left( \begin{array}{cc} a & b \notag \\ 0 & d \notag \end{array} \right) , 
 \left( \begin{array}{cc} e & f \notag \\ g & h \notag \end{array} \right) \right). 
$$
Then the two associated binary quadratic forms are
\begin{align}
Q^1_A(u, v)&= bc u^2 -( bg+cf-de)uv -(eh-fg) v^2 = m u^2+ x uv + s v^2, \notag \\
Q^2_A(u,v) &= ce u^2 - ( - bg+cf+ de) uv-(bh- df\ ) v^2 = nu^2+ y uv+ t v^2. \notag 
\end{align}
So $A$ is the cube required. In particular, we have shown that $g.c.d.(b,e,f) =1$. 
\end{proof}

We next want to show that under the assumption that $D$ is square-free, the data $(D, m, n, x, y)$ uniquely determines a $B'_2(\bZ) \times B'_2(\bZ) \times \SL_2(\bZ)$-orbit.  

\begin{proposition}
Let $D$ be a non-zero square-free integer and $m$, $n$ be non-zero integers. Each solution to the congruence equations
\begin {align}
x^2 &\equiv D  \ ( \mathrm{mod} \ 4m)  \ \text{for} \ 0\leq x \leq 2 |m| -1, \notag \\
y^2  &\equiv D\ ( \mathrm{mod} \ 4n\ )  \ \text{for} \ 0\leq y \leq 2 |n| -1, \notag 
\end{align}
determines uniquely an integral orbit represented by $A$ such that 
\begin{align}
\mathrm{disc}(A) &= D , \notag \\
Q^1_A(u,v) &= mu^2+ x uv+ s v^2, \notag \\
Q^2_A(u,v) &=nu^2 + yuv + t y^2. \notag
\end{align}
\end{proposition}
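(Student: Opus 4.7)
The plan is to prove uniqueness via a canonical form argument: I would reduce any cube $A$ with invariants $(D,m,n,x,y)$ to a specific representative, then verify that the residual action is transitive. Since existence is already handled by the previous lemma, only uniqueness is at issue. First I would normalize $a = 0$: the third-factor $\SL_2(\bZ)$ acts on $(M_A^3, N_A^3)$ by left multiplication, so the column $(a,c)^{T}$ transforms under the standard $\SL_2(\bZ)$-action, and since $m \neq 0$ forces $(a,c) \neq (0,0)$, Bezout produces $g_3 \in \SL_2(\bZ)$ sending $(a,c)^{T}$ to $(0, \gcd(a,c))^{T}$.

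With $a = 0$ the invariant relations become $m = bc$ and $n = ec$, while the middle coefficients of $Q_A^1$ and $Q_A^2$ sum to $x + y = -2cf$, so $c$ divides $c_0 := \gcd(m, n, (x+y)/2)$. Write $c_0 = rc$ with $r \in \bZ_{>0}$ and set $\tilde b = m/c_0$, $\tilde e = n/c_0$, $\tilde f = -(x+y)/(2c_0)$, so that $b = r\tilde b$, $e = r\tilde e$, $f = r\tilde f$. Then the leading and middle coefficients of the third form
\[
Q_A^3 = be\,u^2 + (cf - bg - de)\,uv + (dg - ch)\,v^2
\]
are divisible by $r^2$ and $r$ respectively, which forces
\[
D \;=\; (cf - bg - de)^2 - 4\,be\,(dg - ch) \;\equiv\; 0 \pmod{r^2}.
\]
Since $D$ is square-free this forces $r = 1$, so $c = \pm c_0$. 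Applying $-I \in B'_2(\bZ)$ in the first factor (which negates every entry of the cube except $a$ and preserves both $Q_A^1$ and $Q_A^2$) brings $c$ to $c_0$; this pins down $(b, c, e, f) = (\tilde b, c_0, \tilde e, \tilde f)$.

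With $(a, b, c, e, f)$ now fixed, the remaining conditions $Q_A^1 = mu^2 + xuv + sv^2$ and $Q_A^2 = nu^2 + yuv + tv^2$ collapse to the linear system
\[
de - bg = (x - y)/2, \qquad fg - eh = s, \qquad df - bh = t,
\]
whose consistency is precisely what was verified in the proof of the previous lemma. Its integer solutions form a one-parameter family $(d, g, h) = (d_0 + \lambda b,\, g_0 + \lambda e,\, h_0 + \lambda f)$ with $\lambda \in \bZ$, and the subgroup $\left\{\begin{pmatrix} 1 & 0 \\ \gamma & 1 \end{pmatrix} : \gamma \in \bZ\right\}$ in the third factor preserves $a = 0$ and all of $(b, c, e, f)$ while sending $(d, g, h) \mapsto (d + \gamma b, g + \gamma e, h + \gamma f)$, realizing the whole family as a single orbit. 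Combining the three steps, any two cubes with the same data $(D, m, n, x, y)$ are $B'_2(\bZ)^2 \times \SL_2(\bZ)$-equivalent.

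The main obstacle is the second step, where square-freeness of $D$ is indispensable: without it the congruence $D \equiv 0 \pmod{r^2}$ would not force $r = 1$, and genuinely distinct orbits sharing $(D, m, n, x, y)$ would appear, requiring further invariants tied to the conductor of the quadratic order for a complete classification.
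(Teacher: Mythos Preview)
Your argument is correct and essentially identical to the paper's: both normalize $a=0$ via the third $\SL_2(\bZ)$-factor, invoke square-freeness of $D$ (you through $\mathrm{disc}(Q_A^3)$, the paper through the equivalent expression $D=(bg-ed)^2+4bc(eh-fg)$) to force $\gcd(b,e,f)=1$ and hence pin down $(b,c,e,f)$ up to a global sign, and then identify the residual one-parameter integer family $(d,g,h)=(d_0+\lambda b,\,g_0+\lambda e,\,h_0+\lambda f)$ with the orbit of the lower-triangular unipotent in the third factor. One cosmetic slip: $-I$ in the first $B'_2(\bZ)$-factor negates \emph{all} eight entries rather than ``every entry except $a$'', but since $a=0$ this is harmless.
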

\begin{proof}
From the last lemma, we know that there always exists such a $2 \times 2 \times 2$ integral matrix $A$ satisfying the congruence equations. We want to show that the integral orbit it represent in the 
PVS $\left(B_2'(\bC) \times B_2'(\bC) \times \GL_2(\bC), V(\bC)\right)$
is uniquely determined by the data $(d, m, n, x, y)$. 

Denote $A$ by the pair of matrices 
$$A = \left( \left( \begin{matrix}
a & b \\
c & d 
\end{matrix} \right ) ,  \left( \begin{matrix}
e & f \\
g & h 
\end{matrix} \right )  \right). $$
Under the action of the subgroup $1\times 1 \times \SL_2(\bZ)$, $a(A)$ can be made equal to $0$, so we assume it. We then show that the $(c, b, e, f)$ is uniquely determined up to the sign of $c$.
Then we have equations
\begin{align}
-bg-cf + de &= x, \notag \\
bg-cf-de&= y ,\notag \\
bc &= m ,\notag \\
ce &= n. \notag
\end{align}
after adding the first two equations, 
\begin{align}
cf &= -(x+y) /2, \notag\\
bc &= m, \notag \\
ce &= n. \notag
\end{align}
Therefore we have 
$$ c \times g.c.d.(b, e,f) = g.c.d.(m, n, (x+y)/2).$$
As 
$$D = (bg-ah -ed)^2+4bc (eh-fg), $$
and $D$ is square-free, it implies that $g.c.d.(b, e,f) =1$. Therefore $c = |g.c.d.(m,n,(x+y)/2)|$ as we can transform $c$ to be positive. 

We next show that for two such $2\times 2\times 2$ integral matrices with $a =0$ they are equivalent in the sense one is transformed to another by the action of $1 \times 1 \times B'_2(\bZ)$.
To prove this statement we again require $D$ to be square-free.  

As both have the same associated binary quadratic forms:
\begin{align}
Q^1_A(u,v) &= m u^2 + x uv + sv^2 \notag  =bc u^2 -(bg+ cf-ed)uv-(eh-fg)v^2 ,\notag \\
Q^2_A(u, v) &= n u^2 + y uv + tv^2 \notag = ce u^2 -(-bg+cf+de) uv-(bh-df) v^2 ,\notag
\end{align}
we can set up the following equations by comparing the coefficients:
\begin{align}
x(A_1) = x(A_2): & bg_1+cf - e d_1= b g_2 +cf - e d_2 , \notag \\
y(A_1) = y(A_2): & ed_1 +cf - bg_1 = e d_2+cf - b g_2, \notag \\
s(A_1) =s(A_2) : & e h_1-f g_1 = e h_2- fg_2 , \notag \\
t(A_1) = t(A_2):  &b h_1- d_1f = b h_2 - d_2f .\notag 
\end{align}
Subtracting the right side from the left side on each equation, we have:
\begin{align}
b \Delta (g) - e \Delta( d) =0, \notag \\
e \Delta (h) -f \Delta (g) = 0, \notag \\
b \Delta (h)- f \Delta (d)  = 0, \notag
\end{align}
where we use notation $\Delta(g)= g_1-g_2$. Then we have solutions to the above equation system:
$$ \Delta(d); \Delta(g) = \Delta(d) e/b;   \Delta(h) = \Delta(d)f /b.$$
Under the assumption of $D$ square-free, we have shown that $g.c.d.(b, e, f)$ $=1$. Therefore $b|\Delta(d)$, i.e., the solution has form 
$$ \Delta(d)=bk; \Delta(g) = e k;   \Delta(h) = f k.$$
This implies that we can make $A_1$ equivalent to $A_2$ by the action of $1 \times  1  \times B'_2(\bZ)$. 
\end{proof}

Now we turn to the general case without assuming square-free discriminant. For this, we denote by $B(D, m,n)$ the number of integral orbits $[A]$ in $(B'_2(\bC) \times B'_2(\bC) \times \GL_2(\bC), V(\bC))$ with 
$\mathrm{disc(A)}=D, Q^1_A(1, 0) =m, Q^1_A(1, 0) =n$. Then

\begin{proposition}
Let $m$ and $n$ be non-zero integers, and $D= D_0 D_1^2$ where $D_0$ is square-free. We have
\begin{equation*}B(D,m,n) = \frac{1}{4} \sum_{d|D_1} b(\frac{D}{d^2}, \frac{m}{d}, \frac{n}{d}),\end{equation*}
where
\begin{align*}
b(\frac{D}{d^2}, \frac{m}{d}, \frac{n}{d})= \begin{cases}  d \cdot A(\frac{D}{d^2}, \frac{4m}{d}) \cdot
A(\frac{D}{d^2}, \frac{4n}{d}) & \mathrm{if} \ d \ \mathrm{ divides} \ g.c.d.(D_1, m,n) , \\
0 &  \mathrm{otherwise}. 
\end{cases}
\end{align*}
\end{proposition}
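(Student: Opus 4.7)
\noindent\emph{Proof proposal.} The plan is to leverage Proposition 2.3 in the square-free case, then to stratify general orbits by a content invariant.

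When $D_1 = 1$ (so $D$ is square-free), only the $d = 1$ term of the sum survives, and the claim becomes $B(D, m, n) = \tfrac{1}{4} A(D, 4m) A(D, 4n)$. By Proposition 2.3, orbits biject with pairs $(x, y)$ satisfying $x^2 \equiv D \pmod{4m}$, $y^2 \equiv D \pmod{4n}$ in the prescribed half-ranges. Solutions modulo $4m$ fall into pairs $\{x_0, -x_0\}$, with exactly one member in $[0, 2|m|-1]$; self-paired solutions ($x_0 \equiv -x_0 \pmod{4m}$) would force $4m \mid D$, incompatible with $D$ square-free outside a handful of low-modulus cases handled by direct inspection. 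This yields $\tfrac12 A(D, 4m)$ admissible $x$'s, and multiplying by the analogous count for $y$ gives the result.

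I next establish the sub-claim that for arbitrary $D'$, the number of \emph{primitive} orbits (those whose normalized form from Lemma 2.2 has $\gcd(b, e, f) = 1$) with invariants $(D', m', n')$ equals $\tfrac14 A(D', 4m') A(D', 4n')$. The construction in Lemma 2.2 always produces a primitive cube, and the uniqueness proof of Proposition 2.3 only invokes square-freeness of $D$ to derive $\gcd(b, e, f) = 1$; assuming primitivity directly, the same argument yields a unique primitive orbit per admissible $(x, y)$, so the formula is valid regardless of whether $D'$ itself is square-free.

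For general $D = D_0 D_1^2$, I stratify orbits by a content $d = \gcd(b, e, f)$ computed in the normalized form. From the explicit coefficients in Lemma 2.2 one checks $d \mid m$, $d \mid n$, $d \mid x$, $d \mid y$, and $d^2 \mid D$, hence $d \mid \gcd(D_1, m, n)$. There is a natural contraction sending a content-$d$ orbit with invariants $(D, m, n)$ to a primitive orbit with invariants $(D/d^2, m/d, n/d)$, acting on the associated forms by $Q^i_A \mapsto Q^i_A/d$. This map is well-defined on $B'_2(\bZ) \times B'_2(\bZ) \times \SL_2(\bZ)$-orbits, and I claim it has fibers of cardinality $d$. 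The factor $d$ arises from the free parameter $k \in \bZ/d\bZ$ in reconstructing the remaining cube entries: the uniqueness argument of Proposition 2.3 produced differences of the form $bk, ek, fk$, and primitivity made $k \in \bZ$ absorbable by the Borel; at content $d$ the parameter $k$ is only well-defined modulo $d$. Combining the sub-claim with the fiber count gives $d \cdot \tfrac14 A(D/d^2, 4m/d) A(D/d^2, 4n/d)$ orbits at exact content $d$, and summing over $d \mid \gcd(D_1, m, n)$ recovers the stated formula.

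The main obstacle is verifying the fiber count of $d$: one must carefully rerun the Chinese-remainder argument of Lemma 2.2 and the uniqueness analysis of Proposition 2.3 without the primitivity hypothesis, tracking how the unipotent radicals of the Borel in the first two factors permute the reconstruction parameter $k$ modulo $d$ and confirming that distinct residues modulo $d$ represent distinct Borel-orbits. Some bookkeeping is also required to exclude pathological behavior when $x+y$ is odd or when $m$, $n$ share a common factor with $D_1$ of the wrong parity, but these are boundary cases that should be tractable individually.
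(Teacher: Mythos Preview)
Your proposal is correct and follows essentially the same strategy as the paper: stratify by the content $d=\gcd(b,e,f)$ of a normalized representative, count primitive orbits by $\tfrac14 A(\cdot)A(\cdot)$ using Lemma~2.2 and the uniqueness argument of Proposition~2.3, and then account for the factor $d$ relating primitive orbits at level $(D/d^2,m/d,n/d)$ to all orbits at level $(D,m,n)$.

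The one presentational difference worth noting is how the factor $d$ is realized. You phrase it as a fiber count for a contraction map, with the fiber parameter being the residue $k\in\bZ/d\bZ$ coming from the reconstruction ambiguity $\Delta(d)=bk,\ \Delta(g)=ek,\ \Delta(h)=fk$ in Proposition~2.3. The paper instead works in the opposite direction: starting from a primitive $A'$ with invariants $(D/d^2,m/d,n/d)$, it produces the $d$ orbits at level $(D,m,n)$ explicitly as $g\cdot A'$ for $g=1\times 1\times\bigl(\begin{smallmatrix}1&0\\ j&d\end{smallmatrix}\bigr)$, $0\le j\le d-1$. These coset representatives have determinant $d$ on the third factor, which scales $(\mathrm{disc},m,n)$ by $(d^2,d,d)$ via the characters $\chi_1,\chi_2,\chi_3$. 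The two descriptions are inverse to one another, and the paper's explicit matrices would in fact give you a clean way to verify the fiber count you flag as the main obstacle: the parameter $j$ modulo $d$ is exactly your parameter $k$. The paper's own proof is terse on the points you worry about (distinctness of the $d$ orbits and exhaustiveness of the construction), so your more structural account of the stratification is, if anything, a useful complement.
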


\begin{proof}
By the existence lemma, there is a $2 \times 2 \times 2$ integral matrix $A$ satisfying
\begin{align*}
a =0 \ \mathrm{and} \ g.c.d.(b,e,f) =1,
\end{align*} 
and such that 
\begin{align*}
\mathrm{disc}(A) = D, \ Q^1_A(1, 0) =m \ \mathrm{and} \ Q^1_A(1, 0) =n. 
\end{align*}
The condition $g.c.d.(b,e,f) =1$ implies that the matrix $A$ determines a unique integral orbit.  

If $d| g.c.d.(D_1, m,n)$, we write $m'= \frac{m}{d}$, $n'= \frac{n}{d}$. Applying the existence lemma again to the non-zero integers $\frac{D}{d^2}$, $\frac{m}{d}$, and $\frac{n}{d}$, we can 
find an $2 \times 2 \times 2$ integral matrix $A'$ satisfying 
\begin{align*}
a' =0 \ \mathrm{and} \ g.c.d.(b',e',f') =1,
\end{align*} 
and such that 
\begin{align*}
\mathrm{disc}(A') = \frac{D}{d^2}, \ Q^1_A(1, 0) = \frac{m}{d} \ \mathrm{and} \ Q^2_A(1, 0) =\frac{n}{d}. 
\end{align*}
The condition $g.c.d.(b', e', f')=1$ implies that $A'$ determines a unique integral orbit with discriminant $D/d^2$.  

For each $g$ in $\{1 \times 1 \times \left( \begin{array}{cc }1 & 0 \\
j & d\end{array}\right): 0 \leq j \leq d-1\} $, the integral cube $g \cdot A'$ represents an integral orbit with discriminant $D$. 
\end{proof}

As an immediately corollary, we have
\begin{corollary}
If $D$ is a fundamental discriminant, then
$$B(D, m,n)= \frac{1}{4} A(D, 4m) A(D, 4n).$$
\end{corollary}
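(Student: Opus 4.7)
The plan is to read this off directly from the preceding Proposition by pinning down what $D_1$ is when $D$ is a fundamental discriminant. Recall that a fundamental discriminant is either an odd square-free integer $D \equiv 1 \pmod 4$, or $D = 4m$ with $m$ square-free and $m \equiv 2, 3 \pmod 4$; in particular, since we have fixed $D$ odd throughout the paper, $D$ itself is square-free. Writing $D = D_0 D_1^2$ with $D_0$ square-free, this forces $D_0 = D$ and $D_1 = 1$.

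With $D_1 = 1$, the sum $\sum_{d \mid D_1}$ appearing in the Proposition has only the single term $d = 1$. Since trivially $1$ divides $\gcd(D_1, m, n) = \gcd(1, m, n) = 1$, the second branch of the definition of $b$ is not triggered, and we simply get
\begin{align*}
b(D, m, n) = 1 \cdot A(D, 4m) \cdot A(D, 4n).
\end{align*}
Substituting into the Proposition yields
\begin{align*}
B(D, m, n) = \frac{1}{4} b(D, m, n) = \frac{1}{4} A(D, 4m) A(D, 4n),
\end{align*}
which is the claimed identity.

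There is no real obstacle here: the entire content is bookkeeping about the shape of a fundamental discriminant, and the corollary is labelled ``immediate'' for exactly this reason. The only mild subtlety worth flagging in a complete write-up would be the even fundamental case $D = 4m$ (where $D_1 = 2$), but this does not arise under the standing odd-discriminant assumption of the paper, so the one-line specialization above suffices.
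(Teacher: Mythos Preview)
Your argument is correct and is exactly the intended one: the paper records this as an immediate corollary with no proof, and the one-line specialization $D_1=1$ you give (using the standing odd-discriminant hypothesis) is precisely how it follows from the preceding Proposition.
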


\section{PVS of pairs of quaternary alternating $2$-forms}
In \cite{bhargava1}, Bhargava studies $5$ other PVS obtain from the $D_4$ type via various operations such as symmetrizing, anti-symmetrizing, castling, etc. For example, the if one impose the conditions skew symmetry on the set of cubes, it will produce a new set consisting of pairs of quaternary alternating 2-forms via the fusion process.  Let $\bZ^2 \otimes \wedge^2 \bZ^4$ be the space of paris of quaternary alternating 2-forms. Denote its element $F= (M_F, N_F)$ by 
\begin{align*}
\left( \left( \begin{matrix}
 0 & r_1 & a & b \\
 -r_1 & 0 & c & d \\
 -a & -c & 0 & l_1 \\
 -b & -d & -l_1 & 0 
\end{matrix} \right),  \left( \begin{matrix}
 0 & r_2 & e & f \\
 -r_2 & 0 & g & h \\
 -e & -g & 0 & l_2 \\
 -f & -h & -l_2 & 0 
\end{matrix} \right) \right).
\end{align*}
The group $\SL_2(\bZ)\times \SL_4(\bZ)$ acts naturally on it. Explicitly, the action of $\SL_2(\bZ) \times \SL_4(\bZ)$ is given as follows: an element $(g_1, g_2) \in \SL_2(\bZ)\times \SL_4(\bZ)$ acts by sending the pair $F=(M_F, N_F)$ to:
 $$(g_1, g_2)\cdot (M_F, N_F) = ( s \cdot g_2 M_F g^t_2 + t\cdot g_2 N_F g_2^t, u\cdot g_2 M_F g_2^t+ v\cdot g_2 N_F g_2^t) ,$$
where $g_1 = \left( \begin{array}{cc} s & t \notag \\ u & v \notag \end{array} \right)$. Then Bhargava's fusion process is a $\bZ$-linear mapping:
\begin{align*}
\mathrm{id}\otimes \wedge_{2,2}: \bZ^2 \otimes \bZ^2 \otimes \bZ^2 \to \bZ^2 \otimes \wedge^2(\bZ^2 \oplus \bZ^2) = \bZ^2 \otimes \wedge^2\bZ^4.
\end{align*}
Explicitly, it is given by
\begin{align*}
 \left( \left( \begin{matrix}   
a & b \\
c & d
\end{matrix} \right),  \left( \begin{matrix}   
e & f \\
g & h
\end{matrix} \right) \right) \to \left( \left( \begin{matrix}
 0 & 0 & a & b \\
 0 & 0 & c & d \\
 -a & -c & 0 & 0 \\
 -b & -d & 0 & 0 
\end{matrix} \right),  \left( \begin{matrix}
 0 & 0 & e & f \\
 0 & 0 & g & h \\
 -e & -g & 0 & 0 \\
 -f & -h & 0 & 0 
\end{matrix} \right) \right).
\end{align*}
Bhargava further proves the mapping above is surjective on the level of equivalences classes.

To each $F =(M_F, N_F)\in \bZ^2 \otimes \bZ^4$, we can associate a binary quadratic form $Q$ given by:
\begin{equation*}
Q_F(u, v) = -\mathrm{Pfaff}(M_F u -B_F v) = -\sqrt{\mathrm{Det} (M_F u- N_Fv)},
\end{equation*}
where the sign of the $\text{Pfaff}$ is chosen by taking
$$\text{Pfaff}\left( \left( \begin{matrix}  & I  \\
-I &  \end{matrix} \right) \right) = 1, $$
or alternatively, 
$$\text{Pfaff}\left( \left( \begin{matrix}  0 & r & a & b  \\
-r & 0 & c & d \\
  -a & -c & 0 & l\\
  -b & -d & -l & 0 \end{matrix} \right) \right) = ad-bc - rl . $$
  
We now consider the subgroup of $\SL_2(\bZ)\times \SL_4(\bZ)$ defined by $
H'(\bZ) = B'_2(\bZ)  \times P'_{2,2}(\bZ)
$, where $P'_{2,2}(\bZ)$ is the lower triangular minimal parabolic subgroup of $\SL_4(\bZ)$ that has the shape 
\begin{align*}
\left(   \begin{matrix} \ast & \ast& 0 & 0\\
  \ast & \ast& 0 & 0\\
  s & u & \ast  & \ast \\
  t & v & \ast & \ast \end{matrix}   \right)
\end{align*}
with the upper left $2 \times 2$ matrix determinant $1$.  Let $W(\bC)$ be the subspace of elements with vanishing $r_1$-position. Then the action extends to the complex group $H'(\bC)$ on complexified vector space 
$W(\bC)$. It has three relative invariants, they are 
\begin{align*}
\mathrm{disc}(F) &= \mathrm{disc} ( Q_F) , \\
P_0(F)& = r_2(F), \\
P_1(F) &= -\mathrm{Pfaff}(M_F) . 
\end{align*}
We denote by $\chi_0, \chi_1$ the characters of group $H'(\bC)$ determined by the polynomial invariants $P_0, P_1$ respectively, i.e.,
\begin{align*}
P_0(g F) = \chi_0(g) P_0(F) \  \mathrm{and}\ \  P_1(g F) = \chi_1(g) P_1(F). 
\end{align*}
One can see easily that 
\begin{proposition}
The pair $\left(H'(\bC) , W(\bC)\right)$ is a prehomogeneous vector space.
\end{proposition}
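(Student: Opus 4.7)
The plan is to verify the PVS property directly: produce a single point $F_0\in W(\bC)$ whose $H'(\bC)$-orbit has dimension $\dim W(\bC)$, and invoke irreducibility of $W(\bC)$ to upgrade openness to density.

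\emph{Bookkeeping.} The ambient $\bC^2\otimes\wedge^2\bC^4$ has dimension $12$, and $W(\bC)$ is cut out by the single linear equation $r_1=0$, so $\dim W(\bC)=11$. On the group side, $\dim B'_2(\bC)=3$, and $P'_{2,2}(\bC)$ decomposes into its two $\SL_2$-Levi blocks ($3+3$ dimensions) and its $4$-dimensional lower-left unipotent radical, giving $\dim P'_{2,2}(\bC)=10$ and $\dim H'(\bC)=13$. So the claim reduces to exhibiting an $F_0$ whose stabilizer has dimension exactly $2$.

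\emph{Preservation and choice of $F_0$.} Writing $g_2=\left(\begin{smallmatrix}A&0\\ C&D\end{smallmatrix}\right)\in P'_{2,2}(\bC)$ and decomposing $M_F$ along the same block structure, block multiplication shows that the upper-left $2\times 2$ entry of $g_2M_Fg_2^t$ vanishes identically; together with lower-triangularity of $g_1\in B'_2(\bC)$, which forces the new first slot of $(g_1,g_2)\cdot F$ to be a scalar multiple of $g_2M_Fg_2^t$, this shows the action preserves $W(\bC)$. I would then pick $F_0$ at which the three relative invariants $\mathrm{disc}(F_0)$, $P_0(F_0)=r_2(F_0)$, and $P_1(F_0)=-\mathrm{Pfaff}(M_{F_0})$ are all nonzero; such $F_0$ form a Zariski open subset of $W(\bC)$. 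Evaluating the infinitesimal action $\rho:\mathfrak{h}'(\bC)\to T_{F_0}W(\bC)\cong W(\bC)$ on a basis of the Lie algebra and reading off the Jacobian against the $11$ coordinates $(a,b,c,d,l_1,r_2,e,f,g,h,l_2)$, one checks that the rank is $11$, i.e.\ the orbit has full dimension.

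\emph{Main obstacle.} The only real step is the bookkeeping for the Jacobian: the unipotent radical of $P'_{2,2}(\bC)$ contributes four tangent directions mixing both $M_F$ and $N_F$ coordinates, and they have to be shown linearly independent modulo the Levi and Borel contributions. A cleaner alternative I would try in parallel is an explicit normalization: use the Levi $\SL_2\times\SL_2\subset P'_{2,2}(\bC)$ to put $M_{12}=\left(\begin{smallmatrix}a&b\\ c&d\end{smallmatrix}\right)$ into standard form via the transformation $M_{12}\mapsto AM_{12}D^t$, use the unipotent radical to eliminate $l_1$ and the corresponding entries in $N_F$, and use $B'_2(\bC)$ together with the remaining torus of the Levi to rescale $r_2$ and $\mathrm{Pfaff}(M_F)$ to chosen nonzero values. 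The normalization exhausts $11$ of the $13$ group dimensions, leaving a two-dimensional residual stabilizer matching $\dim H'(\bC)-\dim W(\bC)$, which establishes the proposition.
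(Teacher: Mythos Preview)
The paper gives no proof of this proposition; it is preceded only by the phrase ``One can see easily that'' and followed immediately by the next section. So there is no argument in the paper to compare your proposal against.

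Your approach is the standard one and is sound. The dimension bookkeeping is correct: $\dim W(\bC)=11$, $\dim B'_2(\bC)=3$, and $\dim P'_{2,2}(\bC)=10$ (two $\SL_2$ Levi blocks and a four-dimensional unipotent radical, noting that the paper's condition places the group inside $\SL_4$ with the upper-left block of determinant one, forcing the lower-right block to have determinant one as well). Your verification that the action preserves the hyperplane $r_1=0$ is also correct: with $g_2$ block lower-triangular and $M_{11}=0$, the upper-left block of $g_2 M_F g_2^t$ is $A\cdot 0\cdot A^t=0$, and lower-triangularity of $g_1$ keeps the first slot a scalar multiple of $g_2 M_F g_2^t$.

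Either of your two strategies would complete the argument. The Jacobian computation is the more mechanical route and leaves nothing to chance. The normalization sketch is correct in spirit, but as written it is a little loose: saying the normalization ``exhausts $11$ of the $13$ group dimensions'' is not quite the right bookkeeping, since what you must show is that a generic $F$ can be brought to a \emph{single fixed element} of $W(\bC)$ (not merely that certain invariants are fixed). In practice you would normalize $M_{12}$ via the Levi, kill $l_1$ and adjust the off-diagonal block of $N_F$ via the unipotent radical, then use the residual freedom in $B'_2(\bC)$ and the torus to finish; each step should be checked to see exactly which coordinates are absorbed. If you prefer to avoid that care, the rank-$11$ Jacobian check at a single well-chosen $F_0$ is cleaner.
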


In the next section we will define distributions on two PVS, $(G(\bC), V(\bC))$ and $\left(H'(\bC) , W(\bC)\right)$ respectively, associated to a Maass cusp form of $\GL_2$. We will further show that the distribution on $(G(\bC), V(\bC))$ is roughly the 
product of periods of the Maass form. The key point there is the Bhargava's composition law on $2\times 2\times 2$ integral cubes. To end this section we mention this composition law on $2\times 2\times 2$ cubes 
and further on pairs of quaternary alternating $2$-forms.  

One of the remarkable results of Bhargava is showing that the set of $\SL_2(\bZ) \times \SL_2(\bZ) \times \SL_2(\bZ)$-equivalent classes of projective cubes can be quipped with the group structure. We denote this group by
$\mathrm{Cl}(\bZ^2 \otimes \bZ^2 \otimes \bZ^2; D)$, and the strict ideal class group of the quadratic ring $R(D)$ by $\mathrm{Cl}^{+}(R(D))$. Then the composition law on $\bZ^2 \otimes \bZ^2 \otimes \bZ^2$ is 
\begin{align*}
\mathrm{Cl}( \bZ^2 \otimes \bZ^2 \otimes \bZ^2; D) \cong \mathrm{Cl}^{+}(R(D)) \times \mathrm{Cl}^{+}(R(D))
\end{align*} 
via the map $[A] \to([Q^1_A], [Q^2_A])$. Analogously, the $\SL_2(\bZ) \times \SL_4(\bZ)$-equivalent classes of projective element in $\bZ^2 \otimes \wedge^2\bZ^4$ with discriminant $D$ also possess a group structure. 
If we denote this group by $\mathrm{CL}(\bZ^2 \otimes \wedge^2\bZ^4;D)$, the composition law are 
\begin{align*}
\mathrm{Cl}( \bZ^2 \otimes \wedge^2\bZ^4; D) \cong \mathrm{Cl}^{+}(R(D)) 
\end{align*} 
via the map $[F] \to [Q_F]$, as well as a surjective group homomorphism 
\begin{align*}
\mathrm{Cl}( \bZ^2 \otimes \bZ^2 \otimes \bZ^2; D) \to \mathrm{Cl}( \bZ^2 \otimes \wedge^2\bZ^4; D)
\end{align*}
via the map $[A] \to [\mathrm{id}\otimes \wedge_{2,2}(A)]$.

\section{Periods of automorphic forms of $\GL_2$} 
From now on, we fix $f$ a Maass cusp eigenform $f$ with weight $0$ and full level group $\SL_2(\bZ)$. Let $\pi$ the automorphic representation of $\GL_2$ such that $f\in \pi$.  
Denote by $\nu$ the corresponding eigenvalue of the Laplace operator. 
In order to define distributions on PVS associated to the Maass form $f$, we need to consider the right action of groups on their PVS. 
 
Let $k$ be any field. Denote $V(k)$ the vector space of $2\times 2 \times 2$ matrices. For each $v\in V$, write $v =\left( \left( \begin{matrix}
a & b \\
c & d
\end{matrix}  \right) , \left( \begin{matrix}
e & f \\
g & h
\end{matrix}  \right) \right)$. We define the right action of $(g_1, g_2, g_3) \in \GL_2(k) \times \GL_2(k) \times \GL_2(k)$ on $V(k)$ is given by
\begin{align*}
(M_v^i, N_v^i) \to (M_v^i, N_v^i)  g_i,
\end{align*}
where the operation is the matrix multiplication. 

Let $k^2 \otimes \wedge^2k^4$ be the vector subspace of pairs of quaternary alternating $2$-forms over $k$ such that $w=
 \left( \left( \begin{matrix}
 0 & r_1 & a & b \\
 -r_1 & 0 & c & d \\
 -a & -c & 0 & l_1 \\
 -b & -d & -l_1 & 0 
\end{matrix} \right),  \left( \begin{matrix}
 0 & r_2 & e & f \\
 -r_2 & 0 & g & h \\
 -e & -g & 0 & l_2 \\
 -f & -h & -l_2 & 0 
\end{matrix} \right) \right)$ for $w \in k^2 \otimes \wedge^2k^4$. We define the right action of $(g_1, g) \in \GL_2(k) \times \GL_4(k)$ is given by 
\begin{align*}
(M_w, N_w) \to (s g^t M_w g+ u g^tN_wg, t g^t M_w g+ v g^tN_wg ),
\end{align*} 
where $g_1= \left(   \begin{matrix} s & t \\
u & v   \end{matrix}  \right)$.

\subsection{Periods of $\GL_2$ Maass forms}
Assume from now on $D$ an odd fundamental discriminant. Let $V_{D}(\bZ)$ be the set of integral $2 \times 2 \times 2$ matrices with discriminant $D$. 
Denote the group $G(\bR)= \SL_2(\bR) \times \SL_2(\bR) \times \SL_2(\bR)$. Let $\GL^{+}_2(\bR)$ be the group with positive determinant and let the Haar measure on $\GL_2^{+}(\bR)$ be defined by
$$  (\det(g))^{-2} \prod_{1 \leq i, j \leq 2} dg_{ij}.$$

To any Schwartz function $\phi$ on $V(\bR)$, we associated a mixed Eisenstein series of $G(\bR)$ by 
 \begin{align*}
 \Phi(s_1,g) =  E^{\ast}(s_1, g_1) \sum_{v \in V_{D}(\bZ)} \phi(v g),
 \end{align*}
 where $s_1 \in \bC$ and $E^{\ast}(s_1, g_1)$ is the usual normalized Eisenstein series of $\GL_2$ with residue $\frac{1}{2}$ at the simple pole $s_1=1$. 
 
 Now define a distribution, and call it the Shintani integral, on $(G(\bR), V_D(\bR))$ by 
 \begin{align*}
 J(s_1,f, \phi) =  \int_{G(\bZ) \backslash G(\bR)} \Phi(s_1,g)  f(g_2) \overline{f} (g_3) dg.   
 \end{align*}

We will fix a Schwartz function on $V(\bR)$. For $v \in V(\bR)$, define it by 
\begin{align*}
 \phi (v)&= \exp^{- \pi \tau \left(a(v )^2 + b(v )^2+ c(v ) ^2 + d(v )^2 \right)} \cdot  \exp^{- \pi \tau \left( e(v )^2 + f(v )^2+ g(v ) ^2 + h(v )^2)\right)} ,
  \end{align*}
where $\tau$ is a positive real parameter. Next we compute the residue of $J(s_1,f, \phi)$ at $s_1=1$. 
\begin{theorem} \label{thm:periods}
The Shintani integral $J(s_1,f, \phi)$ has a simple pole at $s_1=1$ with residue  
\begin{align*}
&  \mathrm{Res}_{s_1=1} J(s_1,f, \phi) \ \mathrm{as} \ \tau \to \infty
\\
& \sim 2 L(\phi) \cdot  |\sum\nolimits^{\ast}_{z \in \Lambda_D} f(z)|^2 , 
\end{align*}
where the distribution $L$ is given by 
\begin{align*}
L(\phi) =  \int_{ v \in G(\bR)} \phi(vg) dv,
\end{align*}
where $v$ is any element in $V_D(\bR)$ and $\sum\nolimits^{\ast}$ denotes the sum weighted by one over the order of the stabilizer. 
\end{theorem}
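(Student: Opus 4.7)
The plan is a Rankin--Selberg style unfolding combined with Bhargava's composition law. First I would decompose the sum $\sum_{v\in V_D(\bZ)}\phi(vg)$ into $G(\bZ)$-orbits; writing $G_v(\bZ)$ for the integral stabilizer of a representative $v$, this gives
$$J(s_1, f, \phi) = \sum_{[v]} \int_{G_v(\bZ) \backslash G(\bR)} E^*(s_1, g_1)\, \phi(vg)\, f(g_2)\, \overline{f}(g_3)\, dg.$$
By Bhargava's composition law recalled in Section 3, for fundamental $D$ the orbits $[v]$ are parametrized by triples $(I_1,I_2,I_3) \in \mathrm{Cl}^+(\mathcal{O}_E)^3$ with $I_1 I_2 I_3 = 1$; since $I_1$ is determined by $(I_2,I_3)$, the orbits are in bijection with $\mathrm{Cl}^+(\mathcal{O}_E)^2$, and the binary forms $Q^2_v, Q^3_v$ represent the classes $I_2, I_3$, hence the Heegner points $z_{I_2}, z_{I_3}\in \Lambda(D)$.

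Next I would take the residue at $s_1=1$. Since $E^*(s_1,g_1)$ has a simple pole with residue $\tfrac{1}{2}$ constant in $g_1$, one obtains
$$\mathrm{Res}_{s_1=1} J(s_1,f,\phi) = \tfrac{1}{2} \sum_{[v]} \int_{G_v(\bZ)\backslash G(\bR)} \phi(vg)\, f(g_2)\, \overline{f}(g_3)\, dg.$$
The key geometric observation is that the real stabilizer $G_v(\bR)$ is a (compact, for $D<0$) torus whose projections to the second and third $\SL_2(\bR)$-factors are precisely the tori fixing the Heegner points $z_{I_2}$ and $z_{I_3}$. I would decompose each $g\in G(\bR)$ as a product of an element of $G_v(\bR)$ and a transversal element, and then study the large-$\tau$ asymptotic of each orbit integral.

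As $\tau\to\infty$ the Gaussian $\phi(vg)$ concentrates on the coset $G_v(\bR)\cdot g_0$. Under the decomposed integration, the transversal part yields the real orbital integral $L(\phi)$, while the stabilizer integral over $G_v(\bZ)\backslash G_v(\bR)$ produces the pointwise values $f(z_{I_2})\overline{f}(z_{I_3})$: the stabilizer of $z$ is a conjugate of $\SO(2)$, and right-$\SO(2)$-invariance of the Maass form forces $f$ to equal $f(z)$ along this torus. Summing over orbits, and using that $I_1$ is freely determined by $(I_2,I_3)$,
$$\sum_{[v]} f(z_{I_2})\overline{f}(z_{I_3}) = \Big(\sum_{I\in \mathrm{Cl}^+(\mathcal{O}_E)} f(z_I)\Big)\Big(\sum_{I\in \mathrm{Cl}^+(\mathcal{O}_E)} \overline{f}(z_I)\Big) = \Big|\sum_{z\in\Lambda(D)}f(z)\Big|^2.$$
Careful comparison of the order of $G_v(\bZ)$ with the stabilizer of each Heegner point in $\mathrm{PSL}_2(\bZ)$ converts these unweighted sums into the starred sums $\sum\nolimits^{\ast}$, and combines with the Eisenstein residue $\tfrac{1}{2}$ to produce the overall coefficient $2$.

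The main obstacle is making the $\tau\to\infty$ concentration argument precise. The factorization of $\int_{G_v(\bZ)\backslash G(\bR)}\phi(vg)\, f(g_2)\overline{f}(g_3)\, dg$ into a transversal factor $L(\phi)$ and a stabilizer factor $f(z_{I_2})\overline{f}(z_{I_3})$ is only asymptotically exact, and controlling the error terms while simultaneously normalizing the Haar measures on $\SL_2(\bR)^3$ and on the real stabilizer torus is what will pin down the exact constant $2$ rather than some other rational multiple. A secondary difficulty is handling the case $D>0$, where the stabilizer torus is split and Heegner ``points'' become closed geodesics, but the abstract strategy (unfold, Bhargava, residue, concentrate) is the same.
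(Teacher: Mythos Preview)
Your overall strategy matches the paper's: unfold into $G(\bZ)$-orbits, invoke Bhargava's composition law to parametrize orbits by pairs of Heegner points, take the Eisenstein residue $\tfrac12$, and then analyze each real orbital integral in the $\tau\to\infty$ regime. The one substantive difference is in how the orbital integral is handled. The paper does not decompose $G(\bR)$ as stabilizer-times-transversal and then argue concentration; instead it first translates each $v$ to a fixed base point $v_0\in V_D(\bR)$ by an element $h=(h_1,h_2,h_3)$, so that $h_2,h_3$ are precisely the elements sending $i$ to the Heegner points, and then applies the Cartan decomposition $\SL_2(\bR)=\SO_2 A^+\SO_2$ together with the \emph{uniqueness of the spherical function} to write the $\SO_2$-bi-average of $f(h_{i}\,\cdot\,)$ exactly as $f(z_i)\,\omega_\nu(a)$. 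This yields an exact identity before any limit is taken; the role of $\tau\to\infty$ is then merely to replace $\omega_\nu(a)$ by $\omega_\nu(1)=1$, which is a much softer statement than a full Laplace-type concentration estimate. Your approach would work but, as you yourself flag, is harder to make rigorous and to normalize; the spherical-function trick sidesteps exactly that difficulty. Finally, the constant $2$ in the paper arises concretely as $4\times\tfrac12$: the factor $4$ counts the four connected components of $V_D(\bR)$ (determined by the signs of $Q^1_v(1,0)$ and $Q^2_v(1,0)$), each contributing identically after Bhargava, and $\tfrac12$ is the Eisenstein residue. You should make this component count explicit rather than hoping it falls out of stabilizer bookkeeping.
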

\begin{proof}
The proof, including taking the asymptotic as $\tau \to \infty$, is inspired by the calculation in \cite[\S 3]{katoksarnak}. 
Unfolding the sum over representatives of $V_D(\bZ)$ then taking residue of Eisenstein series, we have 
\begin{align*}
&  \mathrm{Res}_{s_1=1} J(s_1,f, \phi)\\
&= \mathrm{Res}_{s_1=1}    \sum_{v \in  V_D(\bZ) /G(\bZ)} \int_{G_v(\bZ) \backslash  G(\bR) } E^{\ast}(s_1, g_1)  \phi(v g)  f(g_2) \overline{f}(g_3) dg \\
&= \frac{1}{2} \sum_{v \in   V_D(\bZ) /G(\bZ)} \int_{ G_v(\bZ) \backslash G(\bR) } \phi(v g)  f(g_2) \overline{f}(g_3) dg \\
&=  \frac{1}{2} \sum\nolimits^{\ast}_{v \in   V_D(\bZ) /G(\bZ)}  \int_{  G(\bR) } \phi(v g)  f(g_2) \overline{f}(g_3) dg,
\end{align*}
where $\sum\nolimits^{\ast}$ denotes the sum weighted by one over the order of the stabilizer. 

Note that $V(\bR)$ consists of $4$ components determined by the sign of $Q^1_v(1,0)$ and $Q^2_v(1,0)$ for $v \in V(\bR)$. 
As $G(\bR)$ acts transitively on each component and $\SL_2(\bR)$ acts transitively on the $\{(a, b, c): b^2- 4 ac = D; a>0 \}$, for any $v \in  V_D(\bZ)$ chosen in the component $Q^1_v(1,0), Q^2_v(1,0)>0$,
we can find an element $h = (h_1, h_2, h_3)  \in G(\bR)$ such that
\begin{align*}
Q^i_{v h}(u_1, u_2)  = \frac{\sqrt{|D|}}{2} (u_1^2+ u_2^2)
\end{align*} 
or $i =1, 2,3$.
Explicitly, if $Q^1_v(u_1, u_2) = m u_1^2+ x u_1 u_2 +  s u_2^2$ and $m>0$, then $h_1$ is specified to be
\begin{align*}
h_1 = \left(  \begin{matrix}  1 &  -x/ 2m \\ 0 & 1     \end{matrix}   \right) \cdot \left(  \begin{matrix}  (\frac{\sqrt{|D|}}{2m})^{1/2} & 0 \\ 0 & (\frac{\sqrt{|D|}}{2m})^{-1/2}     \end{matrix}   \right). 
\end{align*}
Then $h_1$ defines the Heegner point $z$ corresponding to the form $m u^2  + x u v + s v^2$.  
Put
\begin{align*}
v_0 = \left( \left( \begin{matrix}   0 &  ( \frac{\sqrt{|D|}}{2})^{1/2}   \\  (\frac{\sqrt{|D|}}{2})^{1/2}& 0\end{matrix} \right), \left( \begin{matrix}    (\frac{\sqrt{|D|}}{2})^{1/2} & 0  \\ 0 & -(\frac{\sqrt{|D|}}{2})^{1/2}    \end{matrix} \right) \right). 
\end{align*}
Then after changing variables $g_1 \to h_1 g_1$, $g_2 \to h_2 g_2$ and $g_3 \to h_3 g_3$, $\mathrm{Res}_{s_1=1}J(s_1, f, \phi)$ becomes
\begin{align*}
  \frac{4}{2} \sum_{(z_i, z_j) \in  \Lambda(D)\times \Lambda(D)}  \int_{ G(\bR) }  \phi(v_0 g)  f(h_{i2} g_2) \overline{f} (h_{j3}g_3)dg,   . 
\end{align*}
where we apply the Bhargava's law of composition to parametrize the integral orbits in each component of $V_D(\bZ)$ by the pairs of Heegner points, the $4$ in the numerator is because there are $4$ components and each contributes to the same sum of orbit integrals. 

Now using the Cartan decomposition $\SL_2(\bR) = \SO_2(\bR) A^{+} \SO_2(\bR)$, as $\phi$ is right $\SO_2$ invariant and $v_0$ has stabilizer group isomorphic to $\SO_2 \times \SO_2$, 
we have  for each orbit integral 
\begin{align*}
 \int_{a>1} \int_K \int_K \phi(v_0 k'ak)  f(h_{i2} k'_2 a_2 k_2) \overline{f} (h_{j3} k'_3 a_3 k_3)\delta(a) \frac{da}{a}dk' dk  , 
\end{align*}
where $\delta(a) = \prod_{i=1}^3 \frac{a_i^2 - a_i^{-2}}{2}$ and $K = \SO_2(\bR) \times  \SO_2(\bR) \times  \SO_2(\bR)$. Now
$$ \int   f(h_{i2} k'_2 a_2 k_2) dk'_2 dk_2$$
is again an eigenfunction of the Laplace operator with the same eigenvalue $\nu$ as $f \in \pi$, hence by the uniqueness of a spherical function we have
\begin{align*}
\int   f(h_{i2} k'_2 a_2 k_2) dk'_2 dk_2 = f(h_{i2}) \omega_{\nu}(a_2) \int dk'_2 dk_2. 
\end{align*}
Thus
\begin{align*}
&  \mathrm{Res}_{s_1=1} J(s_1, f, \phi)\\
&= 2  \mid \sum_{z \in  \Lambda(D)}f(z) \mid^2  \int_{k' \in K}  \int_{a>1} \int_K \phi(v_0 k' ak)   \omega_{\nu}(a_2) \overline{\omega}_{\nu}(a_3)  \delta(a)  \frac{da}{a} dk' dk. 
\end{align*}
To avoid evaluating the integral, we determine the asymptotic as $\tau \to \infty$, then 
\begin{align*}
& \int_{k' \in K}  \int_{a>1} \int_K \phi(v_0 k' a k) \omega_{\nu}(a_2)  \overline{\omega}_{\nu}(a_3)  \delta(a)  \frac{da}{a} dk' dk, \ \mathrm{as} \ \tau \to \infty \\
& \sim \int_{k' \in K}  \int_{a>1} \int_K \phi(v_0 k' a k)   \delta(a)  \frac{da}{a} dk' dk
\end{align*}
after change of variables $a \to e^u$ and $u \to \sinh^{-1}(w/\tau)$. Note that  
\begin{align*}
& \int_{k' \in K}  \int_{a>1} \int_K \phi(v_0 k' a k)   \delta(a)  \frac{da}{a} dk' dk =  \int_{G(\bR)} \phi(v_0 g) dg,
\end{align*}
then the proof follows.
\end{proof}

\subsection{Shintani integral on the PVS of pairs of quaternary alternating $2$-forms}
In this section we construct a Shintani integral for the PVS of pairs of quaternary alternating $2$-forms. As before, let $H(\bR)$, $B(\bR)$ be the subgroup of $B^{+}_2(\bR) \times P^{+}_{2,2}(\bR)$, 
$B^{+}_2(\bR) \times P^{+}_{\mathrm{min}}(\bR)$ defined by 
\begin{align*}
\{ g=(h_1, h_2) | \det(h_1) \det(h_2) = 1\},
\end{align*}
where $B^{+}_2(\bR)=B_2(\bR) \cap \GL^{+}_2(\bR)$ the subgroup of $\GL_2^{+}(\bR)$ and $P^{+}_{2,2}(\bR)$, $P^{+}_{\mathrm{min}}(\bR)$ the subgroup of $\GL_4^{+}(\bR)$ consisting of
\begin{align*}
 \left( \begin{matrix}  a_3 & b_3 & s & t\\
c_3& d_3 & u & v\\
0 & 0 & a_2 & b_2 \\
0 & 0 & c_2 &d_2   \end{matrix} \right),  
\ \left( \begin{matrix}  a_3 & b_3 & s & t\\
0& d_3 & u & v\\
0 & 0 & a_2 & b_2 \\
0 & 0 & 0 &d_2   \end{matrix} \right)
\end{align*} 
with positive determinant of upper left $2\times 2$ matrices respectively. Denote by $T(\bR)$ the subgroup acting on $W(\bR)$ trivially. We define $H^1(\bR)$ to be the subgroup of $H(\bR)$ with the determinants of  both upper-left  and lower-right $2\times 2$ matrices equal to $1$. Note that 
\begin{align*}
H(\bR)/T(\bR) = \left( \left(  \begin{matrix} t^{-1} &0\\
0 & t^{-1}   \end{matrix}\right) \times \left(\begin{matrix} t & 0 & 0& 0\\
0 & t & 0 & 0\\
0 & 0 & 1  & 0 \\
0 & 0 & 0 & 1    \end{matrix} \right) \right)\cdot H^1(\bR), t \in \bR^{+}.
\end{align*}
We have similar decomposition for $B(\bR)$,
\begin{align*}
B(\bR)/T(\bR) = \left( \left(  \begin{matrix} t^{-1} &0\\
0 & t^{-1}   \end{matrix}\right) \times \left(\begin{matrix} t & 0 & 0& 0\\
0 & t & 0 & 0\\
0 & 0 & 1  & 0 \\
0 & 0 & 0 & 1    \end{matrix} \right) \right)\cdot B^1(\bR), t \in \bR^{+}.
\end{align*}
Finally denote a Levi subgroup of $B(\bR)$ by
\begin{align*}
M(\bR) =  \left( \left(  \begin{matrix} a_1 &b_1\\
0 & d_1   \end{matrix}\right) \times \left(\begin{matrix} a_3 & b_3 & 0& 0\\
0 & d_3 & 0 & 0\\
0 & 0 & a_2  & b_2 \\
0 & 0 & 0 & d_2     \end{matrix} \right) \right),
\end{align*}
as well as an unipotent subgroup $U(\bR)$ generated by
\begin{align*}
\left(  \begin{matrix} \pm 1 &0\\
0 & \pm 1   \end{matrix}\right)  \times \left(\begin{matrix} \pm1 & 0 & s& t\\
0 & \pm1 & u & v\\
0 & 0 & 1  & 0 \\
0 & 0 & 0 & 1     \end{matrix} \right).
\end{align*}

Recall that $H(\bR)$ acts right on the $W_D(\bR)$, the subset of alternating $2$-forms with fixed discriminant $D$ and with vanishing $r_1$-element.
Define the character $\chi_0$ and $\chi_1$ of $H(\bR)$ by 
\begin{align*}
P_0( wg) = \chi_0(g) P_0(w),\ \ \mathrm{and} \ \  P_1(wg) = \chi_1(g) P_1(w),
\end{align*}
where $P_0(w) = r_2(w)$ and $P_1(w) = -\mathrm{Pfaff}(M_w)$. The discrete  group $B(\bZ)$ acts right invariantly on the space $W'_D(\bZ)$, the subset of $W_D(\bZ)$ with vanishing $a$-element and 
with $|r_2| = 1$, $|c| = 1$. 
Lastly, we denote $W'_D(\bR)$ the subset of $W_D(\R)$ with vanishing $a$-element.

From now on we will use the coordinates $g_2, g_3$ to denote the lower-right and upper-left $2$ by $2$ corner of the matrix in $P_{2,2}$ or $P_{\mathrm{min}}$ respectively.
The measure on $H(\bR)$ is defined as follows. The measure on $B^{+}_2(\bR)$ is the usual left invariant measure. The measure on $P^{+}_{2,2}(\bR)$ is given by $dp = dm dn$ if we decompose $p$ as
\begin{align*}
 p=m n = \left( \begin{matrix}  a_3 & b_3 & 0 & 0  \\
c_3 & d_3 & 0 & 0\\
0 & 0 & a_2 & b_2 \\
0 & 0 & c_2 &d_2   \end{matrix} \right)
  \left(  \begin{matrix}  1 & 0& s & t\\
0 & 1 & u & v\\
0 & 0 & 1& 0\\
0 & 0 & 0 & 1 \end{matrix} \right),
\end{align*}
where $dm$ is the left invariant measure.   

Now we are going to define a distribution, still call it the Shintani integral, on the PVS $(B(\bR), W_D'(\bR))$ by 
\begin{align*}
J(s_0, s_1, f,  \phi ) = \int_{B(\bZ) \backslash B(\bR) /T(\bR) }  \Phi(s_0,  s_1, g) f(g_2) \overline{f} (g_3) dg. 
\end{align*}
We  first need to define a Schwartz function $\phi$ on $W(\bR)$. For $w \in W(\bR)$, define
\begin{align*}
\phi(w  ) & = \exp^{- \pi \tau\left(a(w  )^2 + b(w  )^2+ c(w  ) ^2 + d(w  )^2 \right)}  \exp^{- \pi \tau \left( e(w  )^2 + f(w  )^2+ g(w  ) ^2 + h(w  )^2)\right)} \\
& \ \ \ \cdot \exp^{ -\pi  \left(r_2(w)^2 + l_1(w)^2+ l_2(w)^2  \right)},
\end{align*}
where $\tau$ is again a positive real parameter. 

As we did in the last section, to the Schwartz function $\phi$ on $W(\bR)$ chosen above, we associate a mixed Eisenstein series of the group $B(\bR)$ by taking 
\begin{align*}
\Phi(s_0, s_1,g)= \sum_{w\in W'_D(\bZ)}   \sum_{\substack{n\in\bZ \\ n \neq 0}} \frac{|\chi_0(g)|^{s_0}}{|n|^{s_0}} |\chi_1(g)|^{s_1}  \phi(wg),
\end{align*}
where $s_0, s_1\in \bC$.

Consider the embedding:
$$V_D(\bZ) \to W_D(\bZ)$$
defined by
\begin{align*}
\left( \left(  \begin{matrix} a &b\\
c & d   \end{matrix}\right) \times \left(\begin{matrix} e &f \\
g & h  \end{matrix} \right) \right)  \mapsto
\left( \left(  \begin{matrix} 0 & 0 & a& b\\
0 & 0 & c & d\\
-a & -c & 0  & 0 \\
-b & -d & 0 & 0    \end{matrix}\right) \times \left(\begin{matrix} 0 & 1 & e& f\\
-1 & 0 & g & h\\
-e & -g & 0  & 0 \\
-f & -h & 0 & 0   \end{matrix} \right) \right),
\end{align*}
from now on, $\hat{V}_D(\bZ)$ stands for the image of this embedding.

\begin{lemma}
With the meaning $\hat{V}_D(\bZ)$ shown above, the Shintani integral can be expressed as
\begin{align*}
&J(s_0, s_1, f, \phi) =  \frac{1}{\pi^2 \cdot h(D)} \int_{H(\bZ) \backslash H(\bR) /T(\bR) } \Psi(s_0, s_1, g) f(g_2) \overline{f} (g_3 ) dg,
\end{align*}
where the mixed Eisenstein series $\Psi$ is given by
\begin{align*}
\Psi(s_0, s_1, g) = \sum_{w\in \hat{V}_D(\bZ)} \sum_{\gamma \in U(\bZ)}  \sum_{\substack{n\in\bZ \\ n \neq 0}} \frac{|\chi_0(g)|^{s_0}}{|n|^{s_0}} |\chi_1(g)|^{s_1}  \phi(w\gamma g).
\end{align*}
\end{lemma}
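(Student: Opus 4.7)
The plan is to unfold the sum defining $\Phi$ and simultaneously enlarge the integration domain from $B(\bZ)\backslash B(\bR)/T(\bR)$ to $H(\bZ)\backslash H(\bR)/T(\bR)$, with the orbit bookkeeping governed by Bhargava's composition law. The decisive combinatorial ingredient is the surjective group homomorphism
\begin{equation*}
\mathrm{Cl}(\bZ^2\otimes\bZ^2\otimes\bZ^2;D)\twoheadrightarrow\mathrm{Cl}(\bZ^2\otimes\wedge^2\bZ^4;D)
\end{equation*}
induced by $[A]\mapsto[\id\otimes\wedge_{2,2}(A)]$. Since the source has order $h(D)^2$ and the target has order $h(D)$, this map is $h(D)$-to-$1$, and every $H(\bZ)$-orbit on $W_D(\bZ)$ admits a representative in $\hat V_D(\bZ)$.

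First I would fix a complete system $\{w_j\}_{j\in\mathrm{Cl}^+(R(D))}\subset\hat V_D(\bZ)$ of $H(\bZ)$-orbit representatives on $W_D(\bZ)$. Writing $H=M'\cdot U$, with $M'$ the Levi of $H$ (namely $B_2^+\times(\GL_2\times\GL_2)$ intersected with $\det_1\det_2=1$), and $B=M\cdot U$ as in the paper with common radical $U$, the flag quotient $H/B\cong M'/M$ is canonically identified with $\bP^1\times\bP^1$; under the measure inherited from $dg$ it carries total volume $\pi^2$ (each $\bP^1(\bR)$ contributing $\pi$). Every $w\in W'_D(\bZ)$ can then be uniquely written as $w=w_j\cdot\eta\cdot\gamma$ with $\eta$ in a system of representatives for $B(\bZ)\backslash H(\bZ)/\Stab_{H(\bZ)}(w_j)$ and $\gamma\in U(\bZ)$, and the sum $\sum_w$ in $\Phi$ reorganizes accordingly. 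A standard Rankin--Selberg-type unfolding of the $\eta$-sum converts the integral over $B(\bZ)\backslash B(\bR)/T(\bR)$ into an integral over $B(\bZ)\backslash H(\bR)/T(\bR)$, which by the compactness and volume of $H(\bR)/B(\bR)$ equals $\pi^{-2}$ times the integral over $H(\bZ)\backslash H(\bR)/T(\bR)$; the $\gamma$-sum becomes the inner $U(\bZ)$-sum in $\Psi$; and the $h(D)$-fold multiplicity of the fusion map supplies the factor $h(D)^{-1}$.

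To close the argument the automorphic compatibilities need to be verified: the characters $|\chi_0|^{s_0}$ and $|\chi_1|^{s_1}$ come from characters defined on $H(\bC)$ and hence are $H(\bZ)$-invariant; the Schwartz function $\phi$ is manifestly so; and $f(g_2)\overline{f}(g_3)$ is $\SL_2(\bZ)$-invariant in each block and therefore unaffected by the $\eta$- and $\gamma$-translations acting through the block structure. The main obstacle is the precise arithmetic bookkeeping in the decomposition $w=w_j\cdot\eta\cdot\gamma$: one must verify that the three sources of the constant---the fiber size $h(D)$ of Bhargava's map, the volume $\pi^2$ of $H(\bR)/B(\bR)$, and the stabilizer orders of the chosen $w_j$---combine to exactly $\pi^2 h(D)$, with no parasitic powers of $2$ arising from the sign ambiguities in the definitions of $W'_D(\bZ)$ (where $|r_2|=|c|=1$) or of $U(\bZ)$ (whose unipotent matrices carry $\pm 1$-signs).
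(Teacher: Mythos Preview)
Your approach is essentially the one the paper has in mind, though you supply far more detail than the paper does: its entire proof is the single sentence ``The proof follows from the observation that different $1\times\SL_2(\bZ)\times\SL_2(\bZ)$-orbits of $V_D(\bZ)$ in $\hat V_D(\bZ)$ are in the same orbit by the $U(\bZ)$ action.'' That observation is exactly the mechanism you invoke when you pass from the sum over $W'_D(\bZ)$ to the double sum over $\hat V_D(\bZ)\times U(\bZ)$ and account for the $h(D)$-fold overcount via Bhargava's $h(D)$-to-$1$ fusion homomorphism. Your identification $H(\bR)/B(\bR)\cong\bP^1(\bR)\times\bP^1(\bR)$ (coming from the two $\GL_2/B_2$ blocks in the Levi of $P_{2,2}$) and the resulting volume factor $\pi^2$ are the sources of the remaining constant, which the paper does not spell out at all.

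One caution: your decomposition ``every $w\in W'_D(\bZ)$ can be uniquely written as $w=w_j\cdot\eta\cdot\gamma$'' is not literally correct as stated, because $\hat V_D(\bZ)$ is not contained in $W'_D(\bZ)$ (elements of $\hat V_D(\bZ)$ need not satisfy $a=0$ and $|c|=1$). The matching goes the other way: one shows that the $U(\bZ)$-saturation of $\hat V_D(\bZ)$, after quotienting by $H(\bZ)$, reproduces the $B(\bZ)$-orbits on $W'_D(\bZ)$ with the stated multiplicity. This is precisely what the paper's one-line observation is asserting. Your concern about stray factors of $2$ from the $\pm1$ in $U(\bZ)$ and the sign choices $|r_2|=|c|=1$ is legitimate and is not addressed in the paper either; in practice these signs pair off against the four connected components of the real orbit space, but verifying this cleanly requires more care than either you or the paper provides.
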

\begin{proof}
The proof follows from the observation that different $1\times \SL_2(\bZ) \times \SL_2(\bZ)$-orbit of $V_D(\bZ)$ in $\hat{V}_D(\bZ)$ are in the same orbit by the $U(\bZ)$ action. 
\end{proof}

We next compute its residue at some special values. The next proposition states the residue turns out to be exactly the square of sums over Heegner points again. The reason of the special values we choose here would be made clear 
in the main theorem of next section.
\begin{theorem} \label{thm:residue}
The Shintani integral $J(s_0, s_1, f , \phi)$ has a simple pole at $s_0 =0$ and $s_1 = 1$ whose residue gives rise to the  square product of periods over Heegner points. More precisely, 
\begin{align*}
&  - \frac{\pi^2 \Gamma(1) \zeta(2) h(D)}{4}\mathrm{Res}_{s_0= 0, s_1 = 1} J(s_0, s_1,f, \phi) \ \mathrm{as} \ \tau \to \infty
\\
& \sim L(\phi) \cdot  |\sum\nolimits^{\ast}_{z \in \Lambda_D} f(z)|^2. 
\end{align*}
\end{theorem}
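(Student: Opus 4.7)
The plan is to unfold the Shintani integral down to the orbit integral over $(G(\bR), V_D(\bR))$ already handled by Theorem \ref{thm:periods}, and then identify the residues at $s_0 = 0$ and $s_1 = 1$ with two standard Mellin/Eisenstein-type poles. By the preceding lemma, I may replace $J(s_0, s_1, f, \phi)$ by
\begin{align*}
\frac{1}{\pi^2 h(D)} \int_{H(\bZ)\backslash H(\bR)/T(\bR)} \Psi(s_0, s_1, g) f(g_2) \overline{f}(g_3)\, dg,
\end{align*}
where $\Psi$ is a triple sum over $w \in \hat{V}_D(\bZ)$, $\gamma \in U(\bZ)$, and $n \in \bZ\setminus\{0\}$, weighted by $|\chi_0(g)|^{s_0}|n|^{-s_0}|\chi_1(g)|^{s_1}\phi(w\gamma g)$. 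Since $U(\bZ) \subset H(\bZ)$, the first move is to unfold the $\gamma$-sum against the fundamental domain, removing $\sum_\gamma$ and enlarging the quotient to $(H(\bZ)/U(\bZ))\backslash H(\bR)/T(\bR)$.

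After the unfolding, I use the decomposition $H(\bR)/T(\bR) \simeq \bR^+_{t_0} \times \bR^+_{t_1} \times H^1(\bR)$ suggested by the explicit presentation given before Theorem \ref{thm:residue}, where the two $\bR^+$-directions are exactly those on which $\chi_0$ and $\chi_1$ are nontrivial. Evaluated on $w \in \hat{V}_D(\bZ)$, the Gaussian $\phi$ splits into a Gaussian in the cube variables (scaled by $t_1$ via $\chi_1$), a Gaussian in $r_2$ scaled by $t_0$, and Gaussians in $l_1, l_2$ which decouple. The $t_0$-integration, combined with $\sum_{n\neq 0} |n|^{-s_0}$, is a Tate-type Mellin transform: its meromorphic continuation has a simple pole at $s_0 = 0$ whose residue is $\zeta(2)\Gamma(1)$ times the value of the $r_2$-Gaussian at $r_2 = 0$, matching perfectly the normalization $r_2(\hat{A}) = 1$ built into the embedding $V_D(\bZ) \hookrightarrow \hat{V}_D(\bZ)$. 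The $t_1$-integration against $|\chi_1(g)|^{s_1}$ assembles into an Eisenstein-series-like object on $B_2^+(\bR)$ mirroring $E^{\ast}(s_1, g_1)$ from the proof of Theorem \ref{thm:periods}, with a simple pole at $s_1 = 1$ of residue $\tfrac{1}{2}$.

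Taking the double residue at $(s_0, s_1) = (0, 1)$ and restricting $\phi$ to the $V$-subspace then collapses the expression to the orbit integral
\begin{align*}
\sum_{v \in V_D(\bZ)/G(\bZ)} \int_{G_v(\bZ)\backslash G(\bR)} \phi|_V(vg)\, f(g_2)\overline{f}(g_3)\, dg,
\end{align*}
to which Theorem \ref{thm:periods} applies directly in the asymptotic $\tau \to \infty$, yielding $L(\phi) \cdot |\sum\nolimits^{\ast}_{z\in\Lambda(D)} f(z)|^2$. The main obstacle will be the careful bookkeeping of normalizing constants: the $\tfrac{1}{\pi^2 h(D)}$ from the lemma, the $\zeta(2)\Gamma(1)$ and $\tfrac{1}{2}$ from the two residues, the factor of $4$ coming from the sign-component decomposition of $V(\bR)$ used in the proof of Theorem \ref{thm:periods}, and the sign conventions in $P_0 = r_2$ versus $P_1 = -\mathrm{Pfaff}(M_w)$, must combine to produce exactly the claimed prefactor $-\tfrac{\pi^2 \Gamma(1) \zeta(2) h(D)}{4}$ with the correct overall sign. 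Verifying this constant — rather than the qualitative shape of the residue — is where I expect most of the labor of the proof to sit.
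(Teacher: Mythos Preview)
Your outline is essentially the paper's own proof: use the lemma to pass to the $\Psi$-form, unfold the $U(\bZ)$-sum, split off the one-parameter $\bR^{+}$-direction in $H(\bR)/T(\bR)$, take the residue at $s_0=0$, recognise the remaining $|\chi_1(g)|^{s_1}$-weighted integral over $B_2^{+}(\bR)$ as the completed Eisenstein series $E^{\ast}(s_1,g_1)$, take the residue at $s_1=1$, and then invoke the argument of Theorem~\ref{thm:periods}.

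The one place where your bookkeeping diverges from the paper is the source of the factor $\Gamma(1)\zeta(2)$. In the paper the $s_0=0$ pole is produced by Poisson summation on the $n$-sum against the $y$-integral, and its residue is simply the factor $-2$; the $\Gamma(1)\zeta(2)$ does \emph{not} come from a Tate--Mellin integral in the $t_0$-direction as you suggest, but rather from the normalisation $E^{\ast}(s_1,g_1)=\pi^{-(s_1-1)}\Gamma(s_1)\zeta(2s_1)E(s_1,g_1)$ when one rewrites the $s_1$-integral as an Eisenstein integral and then evaluates at $s_1=1$. Also note there is only a single $\bR^{+}$-direction in $H(\bR)/T(\bR)$, not two; the ``$t_1$-direction'' you posit is already inside $H^1(\bR)$ and is exactly what gets reorganised into $E^{\ast}(s_1,g_1)$. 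With that reattribution of constants your sketch lines up with the paper's argument and the prefactor $-\tfrac{\pi^2\Gamma(1)\zeta(2)h(D)}{4}$ falls out as you anticipated.
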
 

\begin{proof}
Observing that 
\begin{align*}
&\int_{H(\bZ) \backslash H(\bR) /T(\bR) } \Psi(s_0, s_1, g) f(g_2) \overline{f} (g_3 ) dg \\
& = \sum_{w}  \int_{y>0} \int_{H^1_w(\bZ) \backslash H^1(\bR)}
\sum_{\substack{n\in\bZ \\ n \neq 0}} \frac{|\chi_0(g)|^{s_0}}{|n|^{s_0}} |\chi_1(g)|^{s_1} y^{s_0}  \phi(w g y)  f(g_2) \overline{f}(g_3) dg \frac{d^{\times}y}{y^4},
\end{align*}
where $w$ takes the images in $\hat{V}_D(\bZ)$ of representatives of $B_2(\bZ) \times \SL_2(\bZ) \times \SL_2(\bZ)$-orbit of $V_D(\bZ)$.
Applying the classical Poisson summation formula, the integral has residue at $s_0 = 0$ the value
\begin{align*}
&-2 \int
\sum_{w \in V_D(\bZ)} |\chi_1(g)|^{s_1} \phi(w g )  f(g_2) \overline{f}(g_3) dg ,\\
\end{align*}
where the integral is over the group 
$$\left( B_2(\bZ) \times \SL_2(\bZ) \times \SL_2(\bZ) \right) \backslash \left( B_2^{+}(\bR) \times \SL_2(\bR) \times \SL_2(\bR) \right).$$
With the Eisenstein series, the above can be written as
\begin{align*}
&- \frac{2}{ \pi^{-(s_1-1)} \Gamma(s_1) \zeta(2s_1)}  \int_{G(\bZ) \backslash G(\bR)} \sum_{w \in V_D(\bZ)} E^{\ast}(s_1, g_1)  \phi(w g)   f(g_2) \overline{f}(g_3) dg. \\
\end{align*}
The Eisenstein series has residue $\frac{1}{2}$ at $s_1= 1$. Therefore, taking the residue at $s_1 = 1$, the above function becomes
\begin{align*}
& \frac{-1}{  \Gamma(1) \zeta(2)}  \int_{G(\bZ) \backslash G(\bR)} \sum_{w \in V_D(\bZ)}   \phi(w g)   f(g_2) \overline{f}(g_3) dg. \\
\end{align*}
By the similar argument in the proof of theorem \ref{thm:periods}, as $\tau \to \infty$, the right side approximates to
\begin{align*}
& 4 |\sum\nolimits^{\ast}_{z \in \Lambda_D} f(z)|^2  \cdot  \frac{-1}{  \Gamma(1) \zeta(2)}  \int_{G(\bR)} \phi(v_0 g) dg, 
\end{align*}
where $v_0$ is an element in $V_D(\bR)$. 
\end{proof}

By means of the functional equation of $E^{\ast}(s_1, g_1)$, we have
\begin{corollary} \label{corollary:residue}
The residue of Shintani integral $J(s_0, s_1, f, \phi)$ at $s_0=0$ can be expressed by 
\begin{align*}
\mathrm{Res}_{s_0=0} J(s_0, s_1, f, \phi) =\frac{ \pi^{s_1} \Gamma(1-s_1) \zeta(2(1-s_1))}{ \pi^{-(s_1-1)} \Gamma(s_1) \zeta(2s_1)} \mathrm{Res}_{s_0 =0}J(s_0, 1-s_1, f, \phi).
\end{align*}
\end{corollary}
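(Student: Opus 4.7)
The plan is to read off this corollary as a direct bookkeeping consequence of the proof of Theorem~\ref{thm:residue} combined with the functional equation $E^{\ast}(s_1, g_1) = E^{\ast}(1-s_1, g_1)$ of the normalized Eisenstein series. In the course of that proof one extracted the identity
\begin{align*}
\mathrm{Res}_{s_0 = 0} J(s_0, s_1, f, \phi) = -\frac{2}{\pi^{-(s_1-1)} \Gamma(s_1) \zeta(2s_1)}\, I(s_1),
\end{align*}
where I would set
\begin{align*}
I(s_1) := \int_{G(\bZ) \backslash G(\bR)} \sum_{w \in V_D(\bZ)} E^{\ast}(s_1, g_1)\, \phi(wg)\, f(g_2)\, \overline{f}(g_3)\, dg.
\end{align*}
So the first step is simply to isolate this intermediate formula from the earlier argument.

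The second step is to apply the functional equation $E^{\ast}(s_1, g_1) = E^{\ast}(1-s_1, g_1)$ pointwise under the integral sign in $I(s_1)$. Since $E^{\ast}$ is the only factor depending on $s_1$ in the integrand, this gives immediately $I(s_1) = I(1-s_1)$. The only thing to verify is that the interchange is legitimate, which is standard: for $\mathrm{Re}(s_1)$ large the series defining $E^{\ast}$ converges absolutely and $\phi$ has rapid decay, so the integral is absolutely convergent, and one extends by meromorphic continuation.

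For the third step I would substitute $s_1 \mapsto 1-s_1$ in the intermediate formula, obtaining
\begin{align*}
\mathrm{Res}_{s_0 = 0} J(s_0, 1-s_1, f, \phi) = -\frac{2}{\pi^{s_1}\, \Gamma(1-s_1)\, \zeta(2(1-s_1))}\, I(1-s_1),
\end{align*}
and then use $I(s_1) = I(1-s_1)$ to eliminate $I$ by dividing the two expressions. The ratio collapses to the claimed formula
\begin{align*}
\frac{\mathrm{Res}_{s_0=0} J(s_0, s_1, f, \phi)}{\mathrm{Res}_{s_0=0} J(s_0, 1-s_1, f, \phi)} = \frac{\pi^{s_1}\, \Gamma(1-s_1)\, \zeta(2(1-s_1))}{\pi^{-(s_1-1)}\, \Gamma(s_1)\, \zeta(2s_1)}.
\end{align*}
There is no substantive obstacle here; the corollary is a direct symmetry statement, with the only minor point being to justify the analytic continuation needed to apply the functional equation away from the region of absolute convergence. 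Since everything involved is meromorphic in $s_1$, the identity propagates by uniqueness of meromorphic continuation.
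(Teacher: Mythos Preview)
Your proposal is correct and follows exactly the approach the paper indicates: the paper simply writes ``By means of the functional equation of $E^{\ast}(s_1, g_1)$'' before stating the corollary, and you have spelled out precisely that argument by isolating the intermediate identity $\mathrm{Res}_{s_0=0} J(s_0, s_1, f, \phi) = -\tfrac{2}{\pi^{-(s_1-1)}\Gamma(s_1)\zeta(2s_1)} I(s_1)$ from the proof of Theorem~\ref{thm:residue} and invoking $E^{\ast}(s_1,\cdot)=E^{\ast}(1-s_1,\cdot)$ to obtain $I(s_1)=I(1-s_1)$.
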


\section{Central Value of $\GL_2$ L-function}
We first define the group $H(\bQ)$ to be the subgroup of $B_2(\bQ) \times P_{2,2}(\bQ)$ by the following condition
\begin{align*}
H(\bQ) = \{ (h_1, h_2): \det(h_1) \det(h_2)=1   \}. 
\end{align*}
Let $\bA = \bA_{\bQ}$. We then define the group $H(\bA)$ by the strong approximation
\begin{align*}
H(\bA) = H(\bQ) \cdot H(\bR) \cdot O(\bR) \cdot H(\bA_f),
\end{align*}
where $H(\bQ)$ is the diagonal embedding, $H(\bR)$ is defined in last section, $O(\bR)$ the subgroup $H(\bR)\cap (\mathrm{O}_2(\bR) \times \mathrm{O}_4(\bR))$, and 
$H(\bA_f)$ the subgroup of $\prod_{p}B_2(\bZ_p) \times  \prod_{p} P_{2,2}(\bZ_p)$ with
\begin{align*}
H(\bZ_p) =   \{ (h_1, h_2): \det(h_1) \det(h_2)=1   \}. 
\end{align*}
Similarly, we can defined a Borel subgroup $B(\bA)$ of $H(\bA)$ which is inside $B_2(\bA) \times P_{\mathrm{min}}(\bA)$. 

Over $\bA$, we will denote by $W(\bA)$ and its subset $W'(\bA)$ the set of pairs of quaternary alternating $2$-forms with vanishing $r_1$ and $r_1,a$ elements respectively. Clearly the space decomposes
\begin{align*}
W(\bA) = W_{\infty} \times W(\bA_f) = W(\bR) \times W(\bA_f). 
\end{align*}
As before, let $W_D(\bA)$ be the subset of $W(\bA)$ with
\begin{align*}
W_D(\bA) = \{ (w_{\infty} , w_{\bA_f}) : \mathrm{disc}(w_{\infty}) =D, \mathrm{disc}(w_{p}) =D_p\}.
\end{align*}
Then it is clear that $B(\bA)$ acts on $W'_D(\bA)$ invariantly. Last, denote by $T(\bA)$ the subgroup which acts trivially on $W(\bA)$. We will repeatedly identify 
\begin{align*}
t \in Z(\bA) / T(\bA) =  \left( \left(  \begin{matrix} t^{-1} &0\\
0 & t^{-1}   \end{matrix}\right) \times \left(\begin{matrix} t & 0 & 0& 0\\
0 & t & 0 & 0\\
0 & 0 & 1  & 0 \\
0 & 0 & 0 & 1     \end{matrix} \right) \right), t \in \bA^{\times}
\end{align*}
in the following computations, where $Z(\bA)$ stands for the center of $H(\bA)$ and one of its components respectively. 

For the Schwartz function on $W(\bA)$, we have $\mathscr{S}(W(\bA)) = \mathscr{S}(W(\bR)) \otimes \mathscr{S}(W(\bA_f))$ and assume $\phi_{\bA} \in \mathscr{S}(W(\bA))$ decomposes into $\phi \otimes \phi_0$. 
We next fix $\phi_0$ to be a kind of characteristic function of $\prod_{p} W(\bZ_p)$ defined by 
\begin{align*}
\phi_0(w_v) & = 1 \ \  |r_2(w_v)|_v = |c(w_v)|_v = 1, w_v \in W(\bZ_v)\\
& = 0 \ \ \mathrm{otherwise}.
\end{align*}
Then it is easy to show that 
\begin{proposition}
Let 
\begin{align*}
 \Phi_{\bA}(s_0, s_1,g)  =  \sum_{\substack{n\in\bZ \\ n \neq 0}} \sum_{w \in W'_D(\bQ)} \frac{1}{|n|^{s_0}} |\chi_0(g)|^{s_0} |\chi_1(g)|^{s_1} \phi(w g) \phi_0(w g) ,
 \end{align*}
where 
$\phi(\cdot)$ is the Schwartz function of $W(\bR)$ defined in the last section. 
Then we have
\begin{align*}
J (s_0, s_1, f, \phi) =  \int_{B(\bQ)\backslash B(\bA)/ T(\bA)  } \Phi_{\bA}(s_0, s_1, g)  f(g_2) \overline{f}(g_3) dg.
\end{align*}
\end{proposition}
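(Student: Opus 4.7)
The plan is to derive the identity by strong approximation, unfolding the adelic integral on the right into the archimedean Shintani integral on the left. The proof is essentially an adelic--archimedean translation, with the function $\phi_0$ doing the job of restricting a rational sum to an integral one.

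First, I would apply strong approximation for the solvable group $B$ in the form
\begin{align*}
B(\bA) \;=\; B(\bQ)\cdot \bigl(B(\bR)\times K_f\bigr), \qquad K_f := \prod_{p} B(\bZ_p),
\end{align*}
with diagonal intersection $B(\bQ)\cap (B(\bR)\times K_f) = B(\bZ)$. Normalizing the local Haar measures so that $K_f$ has total measure one, this yields a measure-preserving identification
\begin{align*}
B(\bQ)\backslash B(\bA)/T(\bA) \;\cong\; \bigl(B(\bZ)\backslash B(\bR)/T(\bR)\bigr)\times K_f/(K_f\cap T(\bA)),
\end{align*}
so that after the reduction the integral over the compact factor contributes only a constant (normalized to $1$).

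Second, I would evaluate $\Phi_{\bA}(s_0,s_1,g)$ at a point $g = (g_\infty, k_f)$. Because $\chi_0$ and $\chi_1$ are rational characters of $B$ and $k_f$ is a product of $p$-adic units, one has $|\chi_i(k_f)|_p = 1$ at every finite place, hence by the product formula
\begin{align*}
|\chi_0(g)|^{s_0}|\chi_1(g)|^{s_1} \;=\; |\chi_0(g_\infty)|^{s_0}|\chi_1(g_\infty)|^{s_1}.
\end{align*}
The archimedean Schwartz factor $\phi(wg)$ restricts to $\phi(wg_\infty)$, and the finite-adelic factor $\phi_0$, by its very definition, is the characteristic function of the subset of $\prod_p W(\bZ_p)$ cut out by $|r_2|_p = |c|_p = 1$ at every $p$. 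A rational vector $w\in W'_D(\bQ)$ satisfies $\phi_0(wk_f)\neq 0$ precisely when every coordinate of $w$ is a $p$-adic integer for all $p$, and the two distinguished coordinates $r_2(w)$ and $c(w)$ are $p$-adic units at all $p$; by the product formula this characterizes exactly the integral subset $W'_D(\bZ)$ defined in the previous section (the subset with $a=0$, $r_1=0$, and $|r_2|=|c|=1$).

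Third, combining the two steps, and using that $\phi_0$ is right $K_f$-invariant, the sum--integral over $B(\bQ)\backslash B(\bA)/T(\bA)$ collapses onto a sum over $W'_D(\bZ)$ and an integral over $B(\bZ)\backslash B(\bR)/T(\bR)$, producing exactly $\Phi(s_0,s_1,g_\infty)$ and hence $J(s_0,s_1,f,\phi)$. The main technical obstacle is the $K_f$-invariance of $\phi_0$: for the $r_2$-condition it is immediate from the fact that $P_0 = r_2$ is a relative invariant (so $|r_2|_p$ is preserved by $K_f$), but for the $c$-condition, which is not a relative invariant, one has to inspect the explicit right action of $B(\bZ_p)$ on the matrix entries of $w$ and verify that the locus $\{|c|_p = 1\}\cap W(\bZ_p)$ is stabilized. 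This is a straightforward computation using the lower-triangular Iwahori-type shape of $B(\bZ_p)$ and the way it mixes the entries of $w$; once this is checked, the proposition follows.
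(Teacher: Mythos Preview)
The paper does not prove this proposition; it simply prefaces the statement with ``it is easy to show that'' and moves on. Your argument via strong approximation for $B$, together with the observation that $\phi_0$ is precisely the characteristic function that cuts the rational sum over $W'_D(\bQ)$ down to the integral sum over $W'_D(\bZ)$, is the natural way to supply the omitted details and is correct in outline. The one point you flag as needing care---the $K_f$-invariance of the condition $|c|_p=1$, which is not a relative invariant---is indeed the only thing requiring an explicit check, and your plan to verify it from the block-triangular shape of $B(\bZ_p)$ is the right one.
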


\subsection{Proof of Main Theorem~\ref{main}}
We have shown by theorem \ref{thm:residue} that the residue of $J (s_0, s_1, f,\phi)$ at $s_0 = 0, s_1 = 1$ gives a product of the period integrals of the $\GL_2$ Maass form $f$. 
Now by analyzing the local orbit integrals, we will express the same residue with the special value of the L-function of $f$. 

\begin{theorem}
On the right side of the last identity, we have the residue at $s_0=0, s_1=1$ and furthermore
\begin{align*}
& \mathrm{Res}_{s_0 = 0, s_1=1} \int_{B(\bQ)\backslash B(\bA)/ T(\bA)  } \Phi_{\bA}(s_0, s_1, g)  f(g_2) \overline{f}(g_3) dg  \ \mathrm{as} \ \tau \to \infty \\
&  \sim -  \frac{L(\phi)}{2 \pi^{3} \Gamma(1) \zeta(2) } ||f||^2 \frac{\zeta(2) L(1/2, \pi_{E_D}) }{L(1, \pi, \mathrm{Ad})L(1, \chi_D)}\prod_{v \in S}P_v,   
\end{align*}
where $E_D$ is the quadratic field extension over $\bQ$ with the fundamental discriminant $D$ and $S$ is the set of all ramified finite places. 
\end{theorem}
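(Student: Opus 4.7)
The plan is to realize the residue as a global orbital integral that factorizes as an Euler product of local orbit integrals, which are then matched with the Euler factors of the L-functions on the right. By Bhargava's composition law for pairs of quaternary alternating $2$-forms, $\mathrm{Cl}(\bZ^2 \otimes \wedge^2 \bZ^4; D) \cong \mathrm{Cl}^+(R(D))$, together with strong approximation for $B$, the sum over $w \in W'_D(\bQ)$ collapses, modulo the class number $h(D)$, to a single $B(\bQ)$-orbit. Picking a representative $w_0$, unfolding gives
\begin{equation*}
J(s_0,s_1,f,\phi_{\bA}) = h(D)\int_{B_{w_0}(\bQ)\backslash B(\bA)/T(\bA)} \Phi_{\bA}(s_0,s_1,g)\,\phi_{\bA}(w_0 g)\,f(g_2)\overline{f}(g_3)\,dg.
\end{equation*}

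The key structural input from the introduction is that $B_{w_0}(\bQ)$ is the rational unipotent subgroup $N(\bQ)\subset \GL_2(\bQ)$, while the adelic stabilizer satisfies $B_{w_0}(\bA) \cong \GL_2(\bA)$, embedded diagonally in $B(\bA)$ through the $(g_2,g_3)$ coordinates of the Levi. I would then factor the unfolded integral as an integration over $B_{w_0}(\bA)\backslash B(\bA)$ composed with the inner integral over $N(\bQ)\backslash \GL_2(\bA)$. The inner integral, upon identifying $f(g_2)\overline{f}(g_3)$ with $f(g)\overline{f}(g)$ on the diagonal $\GL_2(\bA)$, becomes a Rankin--Selberg convolution whose evaluation produces $||f||^2$ normalized by a ratio involving $L(1,\pi,\mathrm{Ad})$ (which accounts for the $L(1,\pi,\mathrm{Ad})$ in the denominator of the target formula) and a $\zeta(2)$ factor from the standard Eisenstein-unfolding normalization.

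The remaining outer integral over $B_{w_0}(\bA)\backslash B(\bA)$ decomposes as an Euler product because $\phi_{\bA} = \phi \otimes \phi_0$. At each unramified finite prime $p$, the local factor is a generating series over $B(\bZ_p)$-orbits on $W'_D(\bZ_p)$, weighted by $|\chi_0|_p^{s_0}|\chi_1|_p^{s_1}$. By the orbit-counting technique of Section~2, adapted from the case of cubes to pairs of alternating $2$-forms, these orbits are enumerated by $A(D,\cdot)$-type functions, and the generating series telescopes---via the identification with the quadratic $A_2$ Weyl multiple Dirichlet series established in Section~2---into the local Euler factor $\zeta_p(2)\,L_p(1/2,\pi_{E_D,p})\,L_p(1,\chi_{D,p})^{-1}$. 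The archimedean contribution yields $L(\phi)$ after the $\tau\to\infty$ asymptotic, in direct parallel with Theorem~\ref{thm:residue}. At ramified places $v\in S$ the local integral is packaged into $P_v$.

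The technical heart of the argument is the unramified local calculation, where one must match the Dirichlet-series-style orbit counting of $(B, W')$ over $\bZ_p$ with the Satake-parameter Euler factor of $L(s,\pi_{E_D})$. The matching splits into the three cases of $p$ split, inert, or ramified in $E_D = \bQ(\sqrt{D})$, and each case relies on the multiplicative properties of the counting functions $A(D,\cdot)$ together with the explicit computation of $\tilde{P}_2$-type local factors. A secondary subtlety is the justification of interchanging the limit $\tau\to\infty$ with the double residue at $(s_0,s_1) = (0,1)$, for which the Poisson-summation approach of Theorems~\ref{thm:periods} and~\ref{thm:residue} should transfer.
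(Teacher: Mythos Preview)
Your outline has a structural gap at the unfolding step. You propose to unfold the $B(\bQ)$-orbit sum directly and then split the resulting integral as an outer integral over $B_{w_0}(\bA)\backslash B(\bA)$ with an inner integral over $N(\bQ)\backslash \GL_2(\bA)$, treating the latter as a Rankin--Selberg convolution of $f(g)\overline{f}(g)$. This does not work as stated: the stabilizer of the representative $w$ in $B(\bA)$ is \emph{not} a copy of $\GL_2(\bA)$ embedded diagonally in the Levi; the paper computes it explicitly and finds it is the unipotent $N_2(\bA)$ sitting diagonally in the $(g_2,g_3)$ blocks. So there is no inner $\GL_2(\bA)$-integral to perform, and the appearance of $||f||^2$ and $L(1,\pi,\mathrm{Ad})$ does not arise from a period integral of $|f|^2$ over a stabilizer quotient.

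What the paper actually does, and what your sketch is missing, is to first insert the Fourier--Whittaker expansion $\overline{f}(g_3)=\sum_{\alpha\in\bQ^\times}W_{\overline f}\bigl(\bigl(\begin{smallmatrix}\alpha&\\&1\end{smallmatrix}\bigr)g_3\bigr)$ and unfold the $\alpha$-sum against the torus in $B_2(\bQ)$. This replaces $B(\bQ)$ by the subgroup $N(\bQ)$ (with $g_3$-block upper-unipotent), and it is $N(\bQ)$ that acts with a single orbit on $W'_D(\bQ)$; no factor of $h(D)$ enters here. After this Whittaker unfolding the integrand is a product of local Whittaker functions, which is what makes the Euler factorization go through. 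The quantity $||f||^2$ then enters via the local inner products $\int W_{f,p}W_{\overline f,p}\,d^\times y$ (cf.\ Bernstein, Baruch), and $L(1,\pi,\mathrm{Ad})^{-1}$ emerges place-by-place from the Macdonald formula for the resulting spherical function $\sigma(y_2)$, not from a global Rankin--Selberg unfolding. You also omit the use of the functional equation (Corollary~\ref{corollary:residue}) to pass from $s_1$ to $1-s_1$ before unfolding; this is what puts the residue point in a position where the local integrals converge and can be evaluated. Your description of the unramified local computation via $A(D,\cdot)$ is on the right track, but it must be paired with the spherical function $\sigma$, not just with an abstract $A_2$-WMDS identification.
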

\begin{proof}
In terms of the corollary \ref{corollary:residue}, we are going to compute the
$$ \int_{B(\bQ)\backslash B(\bA)/ T(\bA)  } \Phi_{\bA}(s_0, 1-s_1, g)  f(g_2) \overline{f}(g_3) dg.$$
Replacing the cusp form $f$ by its Fourier-Whittaker expansion 
\begin{align*}
f(g_3)  =\sum_{\alpha \in \bQ^{\times}} W_f\left(\left( \begin{matrix}  
\alpha & \\
& 1   
\end{matrix}   \right) g_3\right),
 \end{align*} 
where
\begin{align*}
W_f\left(\left( \begin{matrix}  
\alpha & \\
& 1   
\end{matrix}   \right) g_3\right)&= \int_{\bA / \bQ} f\left(  \left(\begin{matrix}  
1 & x \\
0 & 1
\end{matrix}  \right)\left( \begin{matrix}  
\alpha & \\
& 1   
\end{matrix}   \right) g_3\right) \psi(- x) dx,
\end{align*}
and unfolding the summation over $\alpha \in \bQ^{\times}$, we have  
 \begin{align*}
& \int_{B(\bQ)\backslash B(\bA)/ T(\bA)  } \Phi_{\bA}(s_0, 1-s_1, g) f(g_2) \overline{f}(g_3) dg \\
& = \int_{N(\bQ)\backslash B(\bA)/ T(\bA)  } \Phi_{\bA}(s_0, 1-s_1,g) f(g_2) W_{\overline{f}}(g_3) dg \\ 
\end{align*}
where the element of $N(\bQ)$ is of the form
\begin{align*}
 \left( \begin{matrix}  a_1 & b_1 \\
0 & d_1 \end{matrix} \right) \times \left( \begin{matrix} 1 & b_3 & s & t \\
0 & 1  & u & v \\
0 & 0 &  a_2 & b_2 \\
0& 0 & 0 &  c_2 \end{matrix} \right).
\end{align*}
Unfolding the sum over rational orbits in $W'_D(\bQ)$, the orbital integral becomes
\begin{align*}
 \int_{  N_w(\bQ)\backslash B(\bA)/T(\bA)}  \Phi_{\bA}(s_0, 1-s_1, g) f(g_2)  W_{\overline{f}}(g_3)  dg
\end{align*}
where $w \in W'_D(\bQ) / N(\bQ)$, $N_w(\bQ)$ the stabilizer group in $N(\bQ)$. 
The key observation is that the action $N(\bQ)$ on $W'_D(\bQ)$ has a single orbit of the form
\begin{align*}
w = \left( \left(   \begin{matrix}  0 & 0 & 0 & 1 \\
0 & 0 & -1 & 0  \\
0 & 1 & 0 & 0 \\
-1 & 0 & 0& 0\\
  \end{matrix} \right),  \left(   \begin{matrix}  0 & 1 & 0 & 0 \\
-1 & 0 & 0 & 0  \\
0 & 0 & 0 & D/4 \\
0 & 0 & -D/4& 0\\
  \end{matrix} \right) \right).
\end{align*}
Another key point is the stabilizer group of $w$ in $B(\bA)$ is generated by
\begin{align*}
 \left( \begin{matrix}  1 & 0 \\
0 & 1 \end{matrix} \right) \times \pm  \left( \begin{matrix} 1 & b_3 & 0& 0 \\
0 & 1  & 0 & 0\\
0 & 0 & 1 & b_3 \\
0& 0 & 0 & 1 \end{matrix} \right) 
\end{align*}
where $b_3 \in \bA$, so it is isomorphic to the unipotent subgroup $N_2(\bA)$ of $\GL_2(\bA)$.  
Therefore, integrating the rational stabilizer group and using Selberg-Rankin method, each orbit integral has an Euler factorization. Over the non-Archimedean local places, it is equal to
\begin{align*}
& \prod_{p \neq \infty} \int W_f \left( \left( \begin{matrix} y_3 & 0 \\
 0 & 1   \end{matrix}\right)
 \left( \begin{matrix} y_2 &0 \\
0 & 1  \end{matrix} \right) 
 \left( \begin{matrix} 1 & u_2 \\
 0 & 1\end{matrix} \right) 
 \right)W_{\overline{f}}\left(  \left( \begin{matrix} y_3 & 0 \\
 0 & 1   \end{matrix}\right)  \right)  \\
&  \cdot |y_1|^{2-2s_1} |y_2|_p^{1-s_1}|y_3|_p^{2-2s_1} |t|_p^{s_0}  d^{\times}t d^{\times}y_1 du_1   d^{\times}y_2 du_2  \frac{d^{\times}y_3}{|y_3|_p}.
\end{align*}
One should bear in mind that at every finite place the integral is over the set of $g\in  N_{w}(\bQ_p) \backslash B(\bQ_p)$ such that $w g$ is inside the support of Schwartz function $\phi_0$, and its computation reduces to the orbit counting: how many orbits in $W'_D(\bZ_p)$ represented by 
\begin{align*}
\left(  \left(   \begin{matrix}  0 & 0 & 0 & y_1  y_3 \\
0 & 0 & y_1y_2 y_3 & d  \\
0 & -y_1y_2 y_3 & 0 & 0 \\
-y_1  y_3 & -d & 0& 0\\
  \end{matrix} \right),  \left(   \begin{matrix}  0 & t & e  & f \\
-t & 0 & g& h  \\
-e& -g& 0 & 0 \\
-f & -h & 0& 0\\
  \end{matrix} \right) \right).
\end{align*}
Let us ask the question: how many orbits in $W'_D(\bZ_p)$
such that $t \in \bZ_p^{\times}$, $y_1y_2y_3 \in \bZ_p^{\times}$ and $ y_1y_3 \in \bZ_p$.
The answer is: 
the number of orbits is equal to the number of solutions of 
$x$ such that $x^2 = D \ \mathrm{mod} \ 4|y_2|_p^{-1}$. Therefore, after summing all orbit integrals we have at the finite place $p$ 
\begin{align*}
&  \int  A(D,  |y_2|_p^{-1})  
W_f \left( \left( \begin{matrix} y_3 & 0 \\
 0 & 1   \end{matrix}\right)
 \left( \begin{matrix} y_2^{-1} &0 \\
0 & 1  \end{matrix} \right) 
 \right)W_{\overline{f}}\left(  \left( \begin{matrix} y_3 & 0 \\
 0 & 1   \end{matrix}\right)  \right)  \\
 & \cdot  |y_2|_p^{1-s_1} |y_3|_p^{-1}  d^{\times}y_2 d^{\times}y_3.
\end{align*}

Therefore, the integral $J(1-s_0, s_1,f, \phi)$ can be factored at each place an Euler product. At all finite places, it is the product 
\begin{align*} \tag{1}
&  \prod_{p \neq \infty} \int  A(D,  |y_2|_p^{-1})  
W_f \left( \left( \begin{matrix} y_3 & 0 \\
 0 & 1   \end{matrix}\right)
 \left( \begin{matrix} y_2^{-1} &0 \\
0 & 1  \end{matrix} \right) 
 \right)W_{\overline{f}}\left(  \left( \begin{matrix} y_3 & 0 \\
 0 & 1   \end{matrix}\right)  \right)  \\
 & \cdot  |y_2|_p^{1-s_1} d^{\times}y_2 \frac{d^{\times}y_3}{|y_3|_p}.
\end{align*}

At the real Archimedean place, in the local orbit integral if we integrate the stabilizer group $ \left( \left( \begin{matrix} y_3^{-1} & 0\\
0 & y_3^{-1} \end{matrix} \right)  \times  \left( \begin{matrix} y_3 & 0 & 0 & 0 \\
0 &1 & 0 & 0 \\
0 & 0 & y_3 & 0 \\
0& 0 & 0 & 1 \end{matrix} \right), y_3 \in \bR^{+}\right)$ first, the inner integral is
\begin{align*} \tag{2}
\int_{y_3>0} \sqrt{2 \pi y_2y_3} K_{\nu-1/2} \left( 2 \pi y_2 y_3\right)\cdot \sqrt{2 \pi y_3 }K_{\nu-1/2}\left(2 \pi y_3 \right) e^{2 \pi i y_2 y_3 x_2}  \frac{d^{\times}y_3}{y_3}.
\end{align*} 

Then the orbit integral contains the product of $(1)$ and $(2)$. By means of the functional equation of $E^{\ast}(s, g_3)$, it can be written as the product of 
\begin{align*} \tag{3}
&  \prod_{p \neq \infty} \int  A(D,  |y_2|_p^{-1})  
W_f \left( \left( \begin{matrix} y_3 & 0 \\
 0 & 1   \end{matrix}\right)
 \left( \begin{matrix} y_2^{-1} &0 \\
0 & 1  \end{matrix} \right) 
 \right)W_{\overline{f}}\left(  \left( \begin{matrix} y_3 & 0 \\
 0 & 1   \end{matrix}\right)  \right)  \\
&  \cdot  |y_2|_p^{1-s_1} d^{\times}y_2 d^{\times}y_3
\end{align*}
at all finite places and 
\begin{align*} \tag{4}
& \frac{\pi^{-(1-s-1)} \Gamma(1-s) \zeta(2(1-s))}{\pi^{-(s-1)} \Gamma(s) \zeta(2s)}|_{s = 0} \\
& \cdot \int_{y_3>0} \sqrt{2 \pi y_2y_3} K_{\nu-1/2} \left( 2 \pi y_2 y_3\right)\cdot \sqrt{2 \pi y_3 }K_{\nu-1/2}\left(2 \pi y_3 \right) e^{2 \pi i y_2 y_3 x_2}  d^{\times}y_3
\end{align*} 
at the real place.

The inner product 
\begin{align*}
\int W_{\pi_p}\left(  \left(  \begin{matrix}  y& 0 \\
0 & 1  \end{matrix} \right) \right)  W_{{\pi}^{\vee}_p} \left(  \left(  \begin{matrix} y& 0 \\
0 & 1  \end{matrix} \right) \right)  d^{\times}y
\end{align*}
defines a $\GL_2(\bQ_p)$-invariant inner product, the non-Archimedean case is in \cite{bernstein} and the Archimedean case is proved by Baruch \cite{baruch}, which is needed in our real place computation. \\
Therefore, the integral 
\begin{align*}
 \int W_{f} \left( \left( \begin{matrix} y_3 &0 \\
0 & 1  \end{matrix} \right) g_2 \right)W_{\overline{f}}\left( \left( \begin{matrix} y_3 &0 \\
0 & 1  \end{matrix} \right) \right)   d^{\times}y_3
\end{align*} 
is a spherical function, by the uniqueness of the spherical function, it is 
\begin{align*}
C \cdot \sigma(g_2),
\end{align*}
where
by Macdonald formula the standard spherical function is
\begin{align*}
\sigma\left( \left( \begin{matrix} \omega^n & 0 \\
0 & 1   \end{matrix}  \right)  \right) = \frac{p^{-n/2}}{1+ p^{-1}}  \left( \alpha^n  \frac{1-  \alpha^{-2} p^{-1}}{1-\alpha^{-2}} + \alpha^{-n}  \frac{1-  \alpha^{2} p^{-1}}{1-\alpha^{2}}  \right),
\end{align*}
where $ \alpha=   p^{-\lambda}$ and $\pi_p = \mathrm{Ind}^{\GL_2}_{B_2}\left(|\cdot|^{\lambda} \otimes|\cdot|^{-\lambda}  \right)$; and the constant $C$ is given by 
\begin{align*}
C = \int W_{f} \left( \left( \begin{matrix} y_3 &0 \\
0 & 1  \end{matrix} \right) \right)W_{\overline{f}}\left( \left( \begin{matrix} y_3 &0 \\
0 & 1  \end{matrix} \right) \right)   d^{\times}y_3.
\end{align*}
Therefore the orbit integral at all finite places is equal to
\begin{align*} \tag{5}
&  \prod_{p \neq \infty}   \int_{y_3 \in \bZ_p} W_{f} \left( \left( \begin{matrix} y_3 &0 \\
0 & 1  \end{matrix} \right) \right)W_{\overline{f}}\left( \left( \begin{matrix} y_3 &0 \\
0 & 1  \end{matrix} \right) \right)    d^{\times}y_3   \\
 & \cdot  \int_{y_2 \in \bZ_p}A(D,  |y_2|_p^{-1})  \sigma(y_2)  |y_2|_p^{1-s_1} d^{\times}y_2.
 \end{align*}
The integral at the real place 
\begin{align*} 
\int_{y_3>0} \sqrt{2 \pi y_2y_3} K_{\nu-1/2} \left( 2 \pi y_2 y_3\right)\cdot \sqrt{2 \pi y_3 }K_{\nu-1/2}\left(2 \pi y_3 \right) e^{2 \pi i y_2 y_3 x_2}  d^{\times}y_3
\end{align*} 
is again a spherical function of $\SL_2(\bR)$, it is equal to
\begin{align*}  \tag{6}
&  \int_{0}^{\infty} K_{\nu-1/2} \left( y_3 \right) K_{\nu-1/2}\left( y_3 \right)  d^{\times}y_3 \cdot \omega_{\lambda}(g_2).
\end{align*}
Therefore the product of $(3)$ and $(4)$ becomes the product of 
\begin{align*} \tag{7}
&  \prod_{p \neq \infty}   \int_{y_2 \in \bZ_p}A(D,  |y_2|_p^{-1})  \sigma(y_2)  |y_2|_p^{1-s_1} d^{\times}y_2,
 \end{align*}
and 
\begin{align*}  \tag{8}
&  - \frac{||f||^2}{2\pi} \omega_{\lambda}(g_2).
\end{align*}
Note that the rest of the integral at the real place is 
\begin{align*}
&  4 \int_{B_2(\bR)}  \int_{t= 0}^{\infty}  \int_{x_1 = -\infty}^{\infty} \int_{y_1 = 0}^{\infty} \int_{ u \in U(\bR)} \sum_{\substack{n\in\bZ \\ n \neq 0}} \phi(w_0 t g) \omega_{\lambda}(g_2)  \frac{t^{s_0}}{|n|^{s_0}} y^{1-s_1}  
dx_1 \frac{d^{\times}y_1}{y_1}  dg_2 d^{\times}t du,
\end{align*}
where $w_0 =\left( \left(   \begin{matrix}  0 & 0 & 0 & 1 \\
0 & 0 & 1 & 0  \\
0 & 1 & 0 & 0 \\
-1 & 0 & 0& 0\\
  \end{matrix} \right),  \left(   \begin{matrix}  0 & 1 & 1 & 0 \\
-1 & 0 & 0 & D/4  \\
-1 & 0 & 0 & 0 \\
0 & -D/4 &0& 0\\
  \end{matrix} \right) \right)$, $g$ is the element of $B^1(\bR)$ with the given coordinates.

Now Poission summation applies again to the inner summation. It follows that the residue of the orbit integral at $s_0=0$, as $\tau \to \infty$, approximates to 
\begin{align*} \tag{9}
& \prod_{p \neq \infty}   \int_{y_2 \in \bZ_p}A(D,  |y_2|_p^{-1})  \sigma(y_2)  |y_2|_p^{1-s_1} d^{\times}y_2 \\
& \cdot   \frac{2||f||^2}{\pi^3}  \int_{g_2, g_3 \in \mathrm{SL}_2(\bR)} \int_{x_1 = -\infty}^{\infty}  \int_{y_1 = 0}^{\infty} \phi(w'_0 g)  y_1^{1-s_1}
   dx_1   \frac{d^{\times}y_1}{y_1} dg_2 dg_3,
\end{align*} 
where $w'_0 =\left( \left(   \begin{matrix}    0 & 1 \\
 1 & 0  \\
  \end{matrix} \right),  \left(   \begin{matrix}   1 & 0 \\
 0 & D/4  \\
  \end{matrix} \right) \right)$, $g$ is the element of $G(\bR)$ with the given coordinates.

At the place where $p$ is split in the quadratic extension, 
\begin{align*}
& \int_{|y_2|_p\leq1} A(D, |y_2|_p^{-1}) \sigma\left( \left(    \begin{matrix} y_2 & 0 \\
0 & 1    \end{matrix}  \right)  \right) d^{\times}y_2 \\
& =(1- p^{-1/2} \alpha)^{-1} (1+ p^{-1/2} \alpha)(1+p^{-1})^{-1} \frac{1-  \alpha^{-2} p^{-1}}{1-\alpha^{-2}} \\
& \ \ \ +   (1- p^{-1/2} \alpha^{-1})^{-1} (1+ p^{-1/2} \alpha^{-1}) (1+p^{-1})^{-1}  \frac{1-  \alpha^{2} p^{-1}}{1-\alpha^{2}} \\
& =  \frac{ (1-p^{-1}) (1+ \alpha p^{-1/2} )( 1+ \alpha^{-1} p^{-1/2})}{ (1+p^{-1})   (1-\alpha p^{-1/2})( 1-\alpha^{-1} p^{-1/2})} \\
& = \frac{1}{1+p^{-1}} \frac{L_p(1/2, \pi_{E_D})}{L_p(1, \pi, \mathrm{Ad})}=\frac{1-p^{-1}}{1-p^{-2}}\frac{L_p(1/2, \pi_{E_D})}{L_p(1, \pi, \mathrm{Ad})},
\end{align*}
where $L_p(\cdot, \pi_{E_D})$ stands for the base change local L-function.
At the place where $p$ is unramified and nonsplit in the quadratic extension, we have
\begin{align*}
&\int_{|y_2|_p\leq1} A(D, |y_2|_p^{-1}) \sigma\left( \left(    \begin{matrix} y_2 & 0 \\
0 & 1    \end{matrix}  \right)  \right) d^{\times}y_2 \\
& =(1-p^{-1/2} \alpha)^{-1} (1- p^{-1/2} \alpha) (1+p^{-1})^{-1} \frac{1-  \alpha^{-2} p^{-1}}{1-\alpha^{-2}} \\
& \ \ \ +  (1-p^{-1/2} \alpha^{-1})^{-1} (1- p^{-1/2} \alpha^{-1}) (1+p^{-1})^{-1}  \frac{1-  \alpha^{2} p^{-1}}{1-\alpha^{2}} \\
& =1\\
& =  \frac{1}{1-p^{-1}}\frac{L_p(1/2, \pi_{E_D})}{L_p(1, \pi, \mathrm{Ad})} =\frac{1+p^{-1}}{1-p^{-2}}\frac{L_p(1/2, \pi_{E_D})}{L_p(1, \pi, \mathrm{Ad})}.
\end{align*}

Therefore the value at $s_1=1$ gives rise to
\begin{align*} 
&  \prod_{p \neq \infty}   \int_{y_2 \in \bZ_p}A(D,  |y_2|_p^{-1})  \sigma(y_2) d^{\times}y_2 \\
& \cdot  
 \frac{2||f||^2}{\pi^3} \int_{g_2, g_3 \in \mathrm{SL}_2(\bR)} \int_{x_1 = -\infty}^{\infty}  \int_{y_1 = 0}^{\infty} \phi(w'_0 g)
   dx_1   \frac{d^{\times}y_1}{y_1} dg_2 dg_3 \\
 & = \prod_{p \neq \infty}   \int_{y_2 \in \bZ_p}A(D,  |y_2|_p^{-1})  \sigma(y_2) d^{\times}y_2  \cdot  
 \frac{||f||^2}{\pi^4} \int_{g \in G(\bR)} \phi(w'_0 g) dg,\\
\end{align*} 
and our local computation shows that the Euler product at all unramified places is exactly the same of 
\begin{align*}
\frac{\zeta(2) L(1/2, \pi_{E_D}) }{L(1, \pi, \mathrm{Ad})L(1, \chi_D)},
\end{align*}
so the proof follows.
\end{proof}

\bibliography{BCWCV}

\end{document}